\newcommand{\triple}{{\vert\kern-0.25ex\vert\kern-0.25ex\vert}}
\theoremstyle{plain}
\newcommand{\be}{\begin{equation}}
\newcommand{\ee}{\end{equation}}
\newtheorem{theorem}{Theorem}[section]
\newtheorem{lemma}[theorem]{Lemma}
\newtheorem{proposition}[theorem]{Proposition}
\newtheorem{corollary}[theorem]{Corollary}
\theoremstyle{definition}
\begin{document}

\title[Fast, sketched FEM]{A sketched finite element method for elliptic models}

\author[R.~Lung]{Robert Lung}
\address{Robert Lung\\
School of Engineering\\
University of Edinburgh\\
UK}
\email{robert.lung@ed.ac.uk}

\author[Y.~Wu]{Yue Wu}
\address{Yue Wu\\
Mathematical Institute\\
University of Oxford\\
Oxford\\
UK}
\email{yue.wu@maths.ox.ac.uk}

\author[D.~Kamilis]{Dimitris Kamilis}
\address{Dimitris Kamilis\\
School of Engineering\\
University of Edinburgh\\
EH9 3JL Edinburgh\\
UK}
\email{d.kamilis@ed.ac.uk}

\author[N.~Polydorides]{Nick Polydorides}
\address{Nick Polydorides\\
School of Engineering\\
University of Edinburgh\\
EH9 3JL Edinburgh\\
UK \& The Alan Turing Institute\\
London, UK}
\email{n.polydorides@ed.ac.uk}

\keywords{fRandomised linear algebra, Galerkin finite element method, statistical leverage scores, real-time simulation. } 
\subjclass[2019]{65F05, 65M60, 68W20} 

\begin{abstract}
We consider a sketched implementation of the finite element method for elliptic partial differential equations on high-dimensional models. Motivated by applications in real-time simulation and prediction we propose an algorithm that involves projecting the finite element solution onto a low-dimensional subspace and sketching the reduced equations using randomised sampling. We show that a sampling distribution based on the leverage scores of a tall matrix associated with the discrete Laplacian operator, can achieve nearly optimal performance and a significant speedup. We derive an expression of the complexity of the algorithm in terms of the number of samples that are necessary to meet an error tolerance specification with high probability, and an upper bound for the distance between the sketched and the high-dimensional solutions. Our analysis shows that the projection not only reduces the dimension of the problem but also regularises the reduced system against sketching error. Our numerical simulations suggest speed improvements of two orders of magnitude in exchange for a small loss in the accuracy of the prediction.
\end{abstract}

\maketitle
\tableofcontents

\section{Introduction}
\label{Sec:Introduction}
Motivated by applications in digital manufacturing twins and real-time simulation in robotics, 
we consider the implementation of the Finite Element Method (FEM) in high-dimensional discrete models associated with elliptic partial differential equations (PDE). In particular, we focus on the many-query context, where a stream of approximate solutions are sought for various PDE parameter fields \cite{ElmanSilvesterWathen}, aiming to expedite computations in situations where speedy model prediction is  critical. Realising real-time simulation with high-dimensional models is instrumental to enable digital economy functions and has been driving developments in model reduction over the last decade \cite{Hartmann2018}. Reducing the computational complexity of models is also central to the practical performance of statistical inference and  uncertainty quantification algorithms, where a multitude of model evaluations are necessary to achieve convergence  \cite{LordPowellShardlow}. When real-time prediction is coupled with noisy sensor data, as in the digital twins paradigm, a fast, somewhat inaccurate model prediction typically suffices \cite{Calvetti}.

Our approach is thus tailored to applications where some of the accuracy of the solution can be traded off with speed. In these circumstances the framework of randomised linear algebra presents a competitive alternative \cite{Woodruff}. In the seminal work \cite{DrineasMahoneyER}, Drineas and Mahoney propose an algorithm for computing the solution of the Laplacian of a graph, making the case for sampling the rows of the matrices involved based on their statistical leverage scores. Despite aimed explicitly for symmetric diagonally dominant systems arising, their approach provides inspiration for the numerical solution of PDEs on unstructured meshes. Apart from the algebraic resemblance to the Galerkin FEM systems, the authors introduced sampling based on leverage scores of matrices through the concept of `effective resistance' of a graph derived by mimicking Ohmic relations in resistor networks. As it turns out the complexity of computing the leverage scores is similar to that of solving the high-dimensional problem deterministically, however efficient methods to approximate them have since been suggested \cite{DMMW}. More recently, Avron and Toledo have proposed an extension of \cite{DrineasMahoneyER} for preconditioning the FEM equations by introducing the `effective stiffness' of an element in a finite element mesh \cite{AvronToledo}. Specifically, for sparse symmetric positive definite (SSPD) stiffness matrices, they derive an expression for the effective stiffness of an element and show its equivalence to the statistical leverage scores. Sampling $O(n\log n)$ elements leads to a sparser preconditioner.

In situations where a single, high-dimensional linear system is sought, randomised algorithms suited to SSPD systems are readily available. The methods of Gower and Richtarik for example randomises the row-action iterative methods by taking a sequence of random projections onto convex sets \cite{GowerRichtarik}. This algorithm is equivalent to a stochastic gradient descent method with provable convergence, while their alternative approach in \cite{GowerRichtarik2} iteratively sketches the inverse of the matrix. In \cite{BertsekasYu09}, Bertsekas and Yu present a Monte Carlo method for simulating approximate solutions to linear fixed-point equations, arising in evaluating the cost of stationary policies in Markovian decisions. Their algorithm is based on approximate dynamic programming and has subsequently led to \cite{PolydoridesWangBertsekas}, that extends some of the proposed importance sampling ideas in the context of linear ill-posed inverse problems. 

Real-time FEM computing at the many query paradigm, is hindered by two fundamental challenges: the fast assembly of the stiffness matrix for each parameter field, and the efficient solution of the resulting system to the required accuracy. To mitigate these, is to compromise slightly on the accuracy in order to capitalise on speed. To achieve this we first transform the linear SSPD system into an overdetermined least squares problem, and then project its solution this onto a low-dimensional subspace. This mounts to inverting a low-dimensional, dense matrix whose entries are perturbed by random errors. Our emphasis and contributions are in developing the projected sketching algorithm, and in optimising the sampling process so that it is both efficient in the multi-query context and effective in suppressing the  variance of the solution. We also analyse the complexity of our algorithm and derive, probabilistic error bounds for quality of the approximation. 

Our paper is organised as follows: In section \ref{Sec:fem} we provide a concise introduction to the Galerkin formulation for elliptic boundary value problems, and subsequently derive the projected least squares formulation of the problem. We then describe the sampling distribution used in the sketching and provide the conditions under which the reduced sketched system has a unique solution. Section \ref{sec:algorithm} contains a description of our algorithm, and our main result that describes the complexity of our algorithm in achieving an error tolerance in high probability. We then provide an error analysis addressing the various types of errors imparted on the solution through the various stages of the methodology, before concluding with some numerical experiments based on the steady-state diffusion equation.  

\subsection{Notation}
Let $[m]$ denote the set of integers between 1 and $m$ inclusive. For a matrix $X \in \mathbb{R}^{m \times n}$, $X_{(\ell)}$ and $X^{(\ell)}$ denote its $\ell$-th row and column respectively, and $X_{i j}$ its $(i,j)$-th entry. $X^\dag$ is the pseudo-inverse of $X$ and $\kappa(X)$ its condition number. If $m \geq n$ we define the singular value decomposition $X = U_X \Sigma_X V_X^T$ where $U_X \in \mathbb{R}^{m \times n}$, $\Sigma_X \in \mathbb{R}^{n \times n}$ and $V_X \in \mathbb{R}^{n \times n}$. Unless stated otherwise, singular values and eigenvalues are ordered in non-increasing order. Analogously, for a symmetric and positive definite matrix $A \in \mathbb{R}^{m \times m}$, $\lambda_{\max(A)}$ is the largest eigenvalue, and $\lambda_{\min(A)}$ the smallest. By $\mathrm{nnz}(A)$ we denote the number of non-zero elements in $A$. Further we write $\|\cdot\|$ for the Euclidean norm for a vector or the spectral norm of a matrix and $\| \cdot \|_F$ the Frobenius norm of a matrix. For matrices $X$ and $Y$ with the same number of rows $(X|Y)$ is the augmented matrix formed by column concatenation. The identity matrix is expressed as $I$ or $I_n$ to specify its dimension $n$ when important to the context. We write $y \otimes 1_n$ for the Kronecker product of vector $y$ with the ones vector in $n$ dimensions. 

\section{Galerkin finite element method preliminaries}\label{Sec:fem}

Consider the elliptic partial differential equation 
\begin{equation}\label{pde}
 - \nabla \cdot p \nabla u = f \quad \mathrm{in} \quad \Omega,
\end{equation}
on a bounded, simply connected domain $\Omega \subset \mathbb{R}^d$, $d \in \{2,3\}$ with Dirichlet conditions 
\be\label{bnd}
u = g^{(D)} \quad \mathrm{on} \quad \partial\Omega, 
\ee
on a Lipschitz smooth boundary $\partial \Omega$. Further let $p$ a bounded positive parameter function in the Banach space $L^\infty(\Omega)$ such that
\be\label{padmi}
0 < p_{\min} \leq p \leq p_{\max} < \infty \quad  \mathrm{on} \quad \Omega \cup \partial \Omega,
\ee
for some finite constants $p_{\min}$ and $p_{\max}$. Multiplying \eqref{pde} by an appropriate test function $v$, then integrating over the domain and invoking the divergence theorem yields
\be\label{variational}
\int_\Omega \mathrm{d}x \, \nabla u \cdot p \nabla v = \int_\Omega \mathrm{d}x \, f v,
\ee 
where $\mathrm{d}x$ denotes the $d$-dimensional integration element. Using the standard definition of the Sobolev space on this domain as \be
\mathcal{H}^1(\Omega) \doteq \Bigl \{ u \in L^2(\Omega) \Bigl |  \frac{\partial u}{\partial x_q}\in L^2(\Omega), \quad q=1,\ldots,d  \Bigr \},
\ee
where $L^2(\Omega)$ is the space of square-integrable functions on $\Omega$ we define the solution and test function spaces respectively by
\be
\mathcal{H}^1_U \doteq \Bigl \{ u \in \mathcal{H}^1(\Omega) \Bigl | u = g^{(D)} \; \mathrm{on}\; \partial \Omega \Bigr \},
\quad
\mathcal{H}^1_0 \doteq \Bigl \{ v \in \mathcal{H}^1(\Omega) \Bigl | v = 0 \; \mathrm{on}\; \partial \Omega \Bigr \}.
\ee
Let $f \in L^2(\Omega)$ and $g^{(D)} \in \mathcal{H}^{1/2}(\partial \Omega)$, where the Sobolev space $\mathcal{H}^{1/2}$ is to be understood in terms of a surjective trace operator from $\mathcal{H}_U^1(\Omega)$ to $\mathcal{H}^{1/2}(\partial \Omega)$. Then the weak form of the boundary value problem \eqref{pde}-\eqref{bnd} is to find a function $u \in \mathcal{H}^1_U$ such that 
\be\label{weak}
\int_\Omega \mathrm{d}x \, \nabla u \cdot p \nabla v = \int_\Omega \mathrm{d}x \, f v, \qquad \forall v \in \mathcal{H}^1_0.
\ee
The existence and uniqueness of the weak solution $u$ is guaranteed by the Lax-Milgram theorem \cite{ElmanSilvesterWathen}.

To derive the Galerkin finite element approximation method from the weak form \eqref{weak}, we consider $\mathcal{T}_\Omega \doteq \{\Omega_1, \ldots, \Omega_k\}$ a mesh comprising $k$ elements, having $n$ interior and $n_\partial$ boundary vertices (nodes). Further let $\mathcal{S}^1_\Omega \subset \mathcal{H}^1_0$ the conforming finite dimensional space associated with the chosen finite element basis defined on $\mathcal{T}_\Omega$. Choosing 
$$
\mathcal{S}^1_\Omega \doteq \mathrm{span}\{\phi_1,\ldots, \phi_{n}, \ldots, \phi_{n+n_\partial} \}
$$
to comprise linear interpolation shape functions with local support over the elements in $\mathcal{T}_\Omega$ then we can express the FEM approximation of $u$ in this basis for a set of coefficients $u_1,\ldots,u_{n+n_\partial}$ as   
\be\label{uexp}
u = \sum_{i=1}^{n} u_i \phi_i + \sum_{i=n+1}^{n+n_\partial} \ u_i \phi_i.
\ee
We have made slight abuse of notation by using $u$ for the function in $\mathcal{H}^1_U$ as well as its FEM approximation in $\mathcal{S}^1_\Omega$. In effect, the finite element formulation of the boundary value problem is to find $u \in \mathcal{S}^1_\Omega$ such that
\be
\sum_{\Omega_\ell \in \mathcal{T}_\Omega} \int_{\Omega_\ell} \mathrm{d}x\, \nabla u \cdot p \nabla v = \sum_{\Omega_\ell \in \mathcal{T}_\Omega} \int_{\Omega_\ell} \mathrm{d}x\, f v, \quad \forall v \in \mathcal{S}^1_\Omega.
\ee 
We further define the element-average coefficients 
\be
p_\ell = \frac{1}{|\Omega_\ell|}\int_{\Omega_\ell} \mathrm{d}x\, p , \quad \mathrm{and} \quad f_\ell = \frac{1}{|\Omega_\ell|}\int_{\Omega_\ell} \mathrm{d}x\, f, \quad \ell=1,\ldots, k
\ee
and applying the Dirichlet boundary conditions on the boundary nodes $n_\partial$ we arrive at the Galerkin system of equations for the vector $\{u_1,\ldots, u_n\}$
\be\label{gal}
\sum_{j=1}^{n} \Bigl ( \sum_{\Omega_\ell \in \mathcal{T}_\Omega} \int_{\Omega_\ell} \mathrm{d}x\, \nabla \phi_i \cdot p_\ell \nabla \phi_j \Bigr ) u_j = \sum_{\Omega_\ell \in \mathcal{T}_\Omega} \int_{\Omega_\ell} \mathrm{d}x\, f_\ell \phi_i, \quad i=1,\ldots, n.
\ee
The equations in \eqref{gal} are expressed in a matrix form as 
\be\label{aub}
A u = b,
\ee
where $A \in \mathbb{R}^{n \times n}$ is the symmetric, sparse and positive-definite stiffness matrix, whose dependence on the parameters $p$ is implicit and suppressed for clarity. The FEM construction guarantees the consistency of the system \eqref{aub}, thus $b \in \mathbb{R}^{n}$ is always in the column space of $A$ and consequently it admits a unique solution $u_{\mathrm{opt}} = A^{-1}b$. As we focus to the efficient approximation of $u_{\mathrm{opt}}$ in the many query context we content with two challenges: the efficient assembly of the stiffness matrix, and the speedy solution of the resulted FEM system. 

\subsection{The stiffness matrix}\label{Sec:assfem}

Let $\mathcal{I}_\ell$ is the index set of the $d+1$ vertices of the $\ell$th element, and consider $D_\ell \in \mathbb{R}^{d \times n}$ to be the sparse matrix holding the gradients of the linear shape functions $\phi_i$ where $i \in \mathcal{I}_\ell$. In this $D_\ell^{(i)}$ is then a constant gradients vector associated with the $i$th node of $\Omega_\ell$, and let $z_\ell = |\Omega_\ell|p_\ell$ the element of a vector $z \in \mathbb{R}^k$ such that $Z^2 = \mathrm{diag}(z \otimes 1_d)$ and $D \in \mathbb{R}^{kd \times n}$ a row concatenation of $D_\ell$ matrices for all elements. 
If we define as $Y_\ell = \sqrt{z_\ell}D_\ell$ and $Y \in \mathbb{R}^{kd \times n}$ the concatenation of the $Y_\ell$ matrices as 
\be\label{Ydef}
Y = Z D
\ee
then the stiffness matrix takes the form of a high-dimensional sum or product of sparse matrices 
\be \label{Adec}
A = \sum_{\ell=1}^k Y_\ell^T Y_\ell = Y^T Y,
\ee
which for large $k$ require efficient assembly using reference elements and geometry mappings \cite{KirbyLogg}. The above construction typically leads to a stiffness matrix that is well-conditioned for inversion with the exception of acute element skewness \cite{Higham} and parameter vectors with wild variation \cite{Vavasis}, which cause the the condition number $\kappa(A)$ to increase dramatically. Explicit bounds on the largest and smallest eigenvalues of $A$, and respectively the singular values of $Y$, are given in \cite{KamenskiHuangXu}.   

\section{A regularised sketched formulation}

The sought solution $u_{\mathrm{opt}} = A^{-1}b$ can be alternatively obtained by solving the over-determined least squares problem
\be\label{uLS}
u_{\mathrm{opt}} = u_{\mathrm{LS}} = \arg\min_{u \in \mathbb{R}^n} \|Y u - (Y^T)^\dag b\|^2,
\ee
since 
$$
u_{\mathrm{LS}} = (Y^TY)^{-1} Y^T (Y^T)^\dag b = A^{-1}Y^T (Y^T)^\dag b = A^{-1}b = u_{\mathrm{opt}}.
$$
The fact that the above problem is over-determined implies, at least to some extent, robustness against noise, such as random perturbations on the elements of the matrix $Y$ and vector $b$. A similar error is induced by randomised sketching where we replace \eqref{uLS} with
\be\label{SKuLS}
 {\hat u}_{\mathrm{LS}} = \arg\min_{u \in \mathbb{R}^n} \|\hat Y u - (\hat Y^T)^\dag b\|^2,
\ee
and look for a random approximation $\hat{Y}$ of $Y$ in the sense that ${\hat u}_{\mathrm{LS}} \approx u_{\mathrm{LS}}$. We note that $\hat{Y}$ and $Y$ don't have to be similar as such, e.g. have the same dimensions, as long as the problems are well defined and the optimisers remain similar.
Following \cite{DrineasMahoneyER} and \cite{Pilanci} we seek to approximate $Y$ with some sketch $\hat Y$ by sampling and scaling rows according to probabilities that will be specified later. The number of rows in $\hat{Y}$ in that case equals the number of drawn samples.
Clearly $\hat{Y}$ must have at least $n$ rows as otherwise the problem \eqref{SKuLS} will be under-determined and, due to the non-uniqueness of the solution, the error could become arbitrarily large. On the other hand, if around $n \log(n)$ rows are sampled from a suitable distribution, then Drineas and Mahoney show that the resulting sketch is a good approximation with high probability. However, if substantially less than $n \log(n)$ samples are drawn then the sketching induced error outweighs its computational benefits. In order to understand how this issue can be addressed we note that, if $\hat Y$ has full column-rank and thus the optimiser of \eqref{SKuLS} is unique, the solution of the sketched problem can be obtained by solving the linear system
$$
 \hat Y^T \hat Y u = b,
$$
which is equivalent to solving 
\be\label{equiverr}
 Y^T Y u = b + (Y^T  Y(\hat Y^T \hat Y)^{-1} - I)b = \hat b.
\ee
From \eqref{equiverr} it becomes clear that the sketching induced error can be regarded as an error on the right-hand side of the linear system \eqref{aub} or the least squares problem \eqref{uLS}. We can easily obtain a bound for the relative error given by
$$
\frac{\|\hat b - b\|}{\|b\|} \leq \|Y^T  Y(\hat Y^T \hat Y)^{-1} - I\|
$$
A standard way of dealing with noise as in \eqref{equiverr} is regularisation \cite{regularisationLinSystem}. Suppose that there exists a low-dimensional subspace 
\be
\mathcal{S}_\rho \doteq \{\Psi r\, |\, r \in \mathbb{R}^\rho\},
\ee
spanned by a basis of $\rho \ll n$ orthonormal functions arranged in the columns of matrix $\Psi$, and assume that is sufficient to approximate $u_{\mathrm{opt}}$ within some acceptable level of accuracy, in the sense of incurring a small subspace error $\|(I-\Pi)u_{\mathrm{opt}}\|$. The orthogonal projection operator $\Pi\dot =\Psi \Psi^T$ maps vectors from $\mathbb{R}^n$ onto the subspace $\mathcal{S}_\rho$. Of course, such a subspace can't accommodate all but rather only sufficiently regular $u \in \mathbb{R}^n$. For that reason $\mathcal{S}_\rho$ has to be constructed using prior information (e.g. degree of smoothness) about the solution. Orthogonality of $\Psi$ ensures for any $u_{\mathrm{opt}} = \Pi u_{\mathrm{opt}} + (I - \Pi) u_{\mathrm{opt}}$ the existence of a unique, optimal low-dimensional vector $r_{\mathrm{opt}}$ satisfying 
\be
\Psi r_{\mathrm{opt}} = \Pi u_{\mathrm{opt}}.
\ee 
In these conditions we can pose a projected-regularised least-squares problem replacing \eqref{uLS} by
\be\label{Piu}
\Pi u_{\mathrm{opt}} \approx u_{\mathrm{reg}} = \arg\min_{u \in \mathcal{S}_\rho} \|Y u - (Y^T)^\dag b\|^2,
\ee
in order to improve the robustness of the solution against sketching-induced errors. The problem in \eqref{Piu} still involves high-dimensional quantities such as $Y$ and $b$, but the solution is unique as soon as $\mathcal{S}_\rho$ and the null-space of $Y$ have $\{0\}$ intersection. We start by introducing the low dimensional problem\footnote{We emphasise the contrast between the projected equations in \eqref{eq:lowdim} and the projected variable least squares problem
$$
r' = \arg\min_{r \in \mathbb{R}^\rho} \bigl \|A \Psi r - b \bigr\|^2,
$$
whose solution is
\begin{align*}\nonumber
r' =(\Psi^TA^2\Psi )^{-1}\Psi^TA b =\Psi^T u+(\Psi^TA^2\Psi )^{-1}\Psi^TA^2(I-\Pi) u,
\end{align*}
and incurs a subspace regression error term that is quadratic in $A$. Moreover, note that the right hand side vector in the normal equations $\Psi^TA^TA \Psi r' = \Psi^T A^T b$ has dependence on the parameter through $A$.}
    \begin{equation}\label{eq:lowdim}
        r_{\mathrm{reg}} = \arg\min_{r \in \mathbb{R}^\rho} \|Y \Psi r - (Y^T)^\dag b\|^2.
    \end{equation}
A solution $r_{\mathrm{reg}}$ of \eqref{eq:lowdim} yields a solution $u_{\mathrm{reg}} = \Psi r_{\mathrm{reg}}$ of \eqref{Piu} because the columns of $\Psi$ form an ONB of $\mathcal{S}_\rho$. In addition, we have the following.
\begin{lemma}\label{lem:projection_transform}
    If $Y$ has full column rank and the columns of $\Psi$ form an ONB of $\mathcal{S}_\rho$ so that $\Pi = \Psi \Psi^T$ is the projection onto $\mathcal{S}_\rho$, then
    \begin{equation}\label{eq:eqivprob}
        \arg\min_{u \in \mathcal{S}_\rho} \|Y u - (Y^T)^\dag b\|^2 = \arg\min_{u \in \mathcal{S}_\rho} \|Y\Pi u - (\Psi^T Y^T)^\dag \Psi^T b\|^2.
    \end{equation}
    In particular, both problems have a unique solution.
    \begin{proof}
        Both problems have unique solutions because $\mathcal{S}_\rho$ is convex and $Y$ has (by assumption) full column rank. Therefore it suffices to show that there exists an element $u_{\mathrm{reg}} \in \mathcal{S}_\rho$ that solves both problems. The solution $r_{\mathrm{reg}}$ of \eqref{eq:lowdim} can be found explicitly by solving the linear system
        \begin{equation*}
            \Psi^T Y^T Y \Psi r = \Psi^T Y^T (Y^T)^\dag b \iff  r_{\mathrm{reg}} = (\Psi^T Y^T Y \Psi)^{-1}\Psi^T b.
        \end{equation*}
        We have used that $Y$ has full column rank so that $Y^T (Y^T)^\dag = I$ and $\Psi^T Y^T Y \Psi$ is invertible. Similarly we may consider 
        \begin{equation*}
            \arg\min_{r \in \mathbb{R}^\rho} \|Y \Pi \Psi r - (\Psi^T Y^T)^\dag \Psi^T b\|^2,
        \end{equation*}
        which produces solutions $r_\Psi$ such that $\Psi r_\Psi$ is a solution of the right-hand side of \eqref{eq:eqivprob}. Since $\Pi \Psi = \Psi$ and $Y \Psi$ has full column rank we can write $r_\Psi$ as
        \begin{equation*}
            \Psi^T Y^T Y \Psi r_{\Psi} = \Psi^T Y^T(\Psi^T Y^T)^\dag \Psi^T b  \iff  r_\Psi = (\Psi^T Y^T Y \Psi)^{-1}\Psi^T b.
        \end{equation*}
        We conclude that $\Psi (\Psi^T Y^T Y \Psi)^{-1}\Psi^T b$ is a solution to both sides of \eqref{eq:eqivprob} which completes the proof.
    \end{proof}
\end{lemma}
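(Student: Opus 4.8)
The plan is to parametrise any $u \in \mathcal{S}_\rho$ as $u = \Psi r$, reduce each of the two optimisation problems to its normal equations in the variable $r$, and check that under the stated hypotheses these two linear systems are \emph{identical}. First I would record the structural facts that make everything well posed: $\mathcal{S}_\rho$ is a linear subspace, hence convex; $Y$ has trivial null space by the full-column-rank assumption and $\Psi$ is injective by orthonormality of its columns, so the composite $Y\Psi$ has full column rank $\rho$; consequently $\Psi^T Y^T Y \Psi$ is symmetric positive definite. This already settles the ``in particular'' clause — each objective, seen as a quadratic in $r$, is strictly convex, so each problem has a unique minimiser — and it reduces the lemma to exhibiting one $u_{\mathrm{reg}} \in \mathcal{S}_\rho$ solving both.

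For the left-hand problem, substituting $u = \Psi r$ gives $\min_{r \in \mathbb{R}^\rho}\|Y\Psi r - (Y^T)^\dag b\|^2$ with normal equations $\Psi^T Y^T Y \Psi r = \Psi^T Y^T (Y^T)^\dag b$. Since $Y$ has full column rank, $Y^T$ has full row rank and $(Y^T)^\dag$ is a right inverse of $Y^T$, so $Y^T (Y^T)^\dag = I_n$ and the right-hand side collapses to $\Psi^T b$; the unique solution is $r_{\mathrm{reg}} = (\Psi^T Y^T Y \Psi)^{-1}\Psi^T b$.

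For the right-hand problem I would first use $\Pi\Psi = \Psi\Psi^T\Psi = \Psi$ to see that $Y\Pi u = Y\Psi r$ whenever $u = \Psi r$, turning it into $\min_{r}\|Y\Psi r - (\Psi^T Y^T)^\dag \Psi^T b\|^2$ with normal equations $\Psi^T Y^T Y \Psi r = \Psi^T Y^T (\Psi^T Y^T)^\dag \Psi^T b$. The crucial observation is that $\Psi^T Y^T = (Y\Psi)^T$ has full row rank $\rho$, so $(\Psi^T Y^T)^\dag$ is a right inverse of $\Psi^T Y^T$ and the right-hand side again reduces to $\Psi^T b$. Hence both normal systems coincide, their common solution is $r_{\mathrm{reg}}$, and $u_{\mathrm{reg}} = \Psi(\Psi^T Y^T Y \Psi)^{-1}\Psi^T b$ solves both problems.

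The argument is short and essentially computational; the one place that needs care is the bookkeeping of one-sided pseudoinverse identities — recognising that $(Y^T)^\dag$ and $(\Psi^T Y^T)^\dag$ act as \emph{right} inverses rather than full inverses, which is precisely where the full-column-rank hypothesis on $Y$ and (via full column rank of $Y\Psi$) the orthonormality of $\Psi$ are used. I would make sure to state explicitly that $Y\Psi$ is injective, since both the invertibility of $\Psi^T Y^T Y\Psi$ and the full-row-rank claim for $\Psi^T Y^T$ rest on it.
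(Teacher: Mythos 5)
Your proposal is correct and follows essentially the same route as the paper's own proof: parametrise $u=\Psi r$, pass to the normal equations, and use that $(Y^T)^\dag$ and $(\Psi^T Y^T)^\dag$ act as right inverses (thanks to full column rank of $Y$ and hence of $Y\Psi$) so that both right-hand sides collapse to $\Psi^T b$ and both problems are solved by $\Psi(\Psi^T Y^T Y\Psi)^{-1}\Psi^T b$. Your explicit remark that strict convexity of the quadratic in $r$ together with injectivity of $\Psi$ gives uniqueness is a slightly more careful phrasing of the paper's terse uniqueness claim, but it is not a different argument.
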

The right hand side of \eqref{eq:eqivprob} has a very natural interpretation and is obtained by embedding the rows of $Y$, the vector $b$ and the variable $u$ in $\mathcal{S}_\rho$ using its low dimensional representation from the basis induced by the columns of $\Psi$. In view of Lemma \ref{lem:projection_transform} we may regularise the problem from \eqref{SKuLS} and obtain an embedded sketched counterpart to \eqref{Piu} as 
\be\label{SPiu}
{\hat u}_{\mathrm{reg}} = \arg\min_{u \in \mathcal{S}_\rho} \|\hat Y \Pi u - (\Psi^T \hat Y^T)^\dag \Psi^T b\|^2.
\ee
We argue that \eqref{SPiu} is much more robust to the noise imparted by the approximation $\hat Y$ and produces solutions with controlled errors even if substantially less than $n$ suitably drawn samples are used for the approximation. In order to see why, notice that the problem \eqref{SPiu} can be expressed in terms of the low-dimensional vector of coefficients
\be\label{eqn:requiv}
\hat{r}_{\mathrm{reg}} = \arg\min_{r \in \mathbb{R}^\rho} \|\hat Y \Psi r - (\Psi^T \hat Y^T)^\dag \Psi^T b\|^2.
\ee
so that $\Psi \hat{r}_{\mathrm{reg}} = {\hat u}_{\mathrm{reg}}$. Recalling that $A=Y^TY$, it is convenient to introduce
\be
X = Y \Psi \quad \mathrm{and} \quad G = X^TX = \Psi^T A \Psi,
\ee
together with their sketched approximations
\be
\hat X = \hat Y \Psi \quad \mathrm{and} \quad \hat G = \hat X^T \hat X.
\ee
\begin{lemma}\label{lem:uReg}
If $ \hat X = \hat Y \Psi $ has full column rank then the solution of the least-squares problem \eqref{eqn:requiv} is given by $\hat{r}_{\mathrm{reg}} = \hat G^{-1} \Psi^T b$ and we have
\be
{\hat u}_{\mathrm{reg}} = \Psi \hat{r}_{\mathrm{reg}} = u_{\mathrm{reg}} + \Psi (\hat G^{-1} G - I)\Psi^T u_{\mathrm{reg}} .
\ee
where $u_{\mathrm{reg}}$ and ${\hat u}_{\mathrm{reg}}$ are the solutions of \eqref{Piu} and \eqref{SPiu} respectively.
\end{lemma}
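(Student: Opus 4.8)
The plan is to solve the reduced least-squares problem \eqref{eqn:requiv} in closed form through its normal equations and then to substitute the explicit expression for $u_{\mathrm{reg}}$ already derived in the proof of Lemma~\ref{lem:projection_transform}. First I would rewrite \eqref{eqn:requiv} with the shorthand $\hat X = \hat Y\Psi$ as $\hat r_{\mathrm{reg}} = \arg\min_{r\in\mathbb{R}^\rho}\|\hat X r - (\hat X^T)^\dag\Psi^T b\|^2$, so that the minimiser satisfies the normal equations $\hat X^T\hat X r = \hat X^T(\hat X^T)^\dag\Psi^T b$, i.e. $\hat G r = \hat X^T(\hat X^T)^\dag\Psi^T b$. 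The key algebraic fact is that, because $\hat X$ has full column rank, $\hat X^T$ has full row rank, hence $(\hat X^T)^\dag$ is a right inverse of $\hat X^T$ and $\hat X^T(\hat X^T)^\dag = I_\rho$; at the same time $\hat G = \hat X^T\hat X$ is invertible. This gives $\hat r_{\mathrm{reg}} = \hat G^{-1}\Psi^T b$ and therefore $\hat u_{\mathrm{reg}} = \Psi\hat r_{\mathrm{reg}} = \Psi\hat G^{-1}\Psi^T b$.

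Next I would recall from the proof of Lemma~\ref{lem:projection_transform} that the unsketched regularised solution is $r_{\mathrm{reg}} = (\Psi^T Y^T Y\Psi)^{-1}\Psi^T b = G^{-1}\Psi^T b$, so $u_{\mathrm{reg}} = \Psi G^{-1}\Psi^T b$. Starting from the claimed right-hand side and using the orthonormality relation $\Psi^T\Psi = I_\rho$, the computation
\[
u_{\mathrm{reg}} + \Psi(\hat G^{-1}G - I)\Psi^T u_{\mathrm{reg}}
= \Psi G^{-1}\Psi^T b + \Psi(\hat G^{-1}G - I)G^{-1}\Psi^T b
= \Psi\bigl(G^{-1} + \hat G^{-1} - G^{-1}\bigr)\Psi^T b
= \Psi\hat G^{-1}\Psi^T b
\]
shows that the right-hand side equals $\hat u_{\mathrm{reg}}$, which closes the argument.

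The only delicate point — more a matter of care than a genuine obstacle — is the pseudo-inverse identity $\hat X^T(\hat X^T)^\dag = I_\rho$: one must invoke the full-column-rank hypothesis on $\hat X$ in the correct orientation, since it is $\hat X^T(\hat X^T)^\dag$, and not $(\hat X^T)^\dag\hat X^T$ (which is only an orthogonal projection), that collapses to the identity. Equivalently, one can bypass the normal equations altogether: the range of $(\hat X^T)^\dag$ coincides with the column space of $\hat X$, so the least-squares residual in \eqref{eqn:requiv} is in fact zero and $\hat r_{\mathrm{reg}}$ solves $\hat X\hat r_{\mathrm{reg}} = (\hat X^T)^\dag\Psi^T b$ exactly; left-multiplying by $\hat X^T$ then yields the same formula $\hat r_{\mathrm{reg}} = \hat G^{-1}\Psi^T b$. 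Everything else is bookkeeping with $\Psi^T\Psi = I_\rho$ and the invertibility of $G$ and $\hat G$.
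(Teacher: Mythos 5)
Your proposal is correct and follows essentially the same route as the paper: both derive $\hat r_{\mathrm{reg}} = \hat G^{-1}\Psi^T b$ from the normal equations via the full-row-rank identity $\Psi^T\hat Y^T(\Psi^T\hat Y^T)^\dag = I_\rho$ and then combine this with $u_{\mathrm{reg}} = \Psi G^{-1}\Psi^T b$; the only difference is that you verify the final identity by substituting into the right-hand side, whereas the paper expands $\Psi\hat G^{-1}G\Psi^T u_{\mathrm{reg}}$ forwards using $\hat G^{-1}G = I + \hat G^{-1}(G-\hat G)$ and $\Pi u_{\mathrm{reg}} = u_{\mathrm{reg}}$, which is the same algebra run in the opposite direction.
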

\begin{proof} 
If $\hat Y\Psi$ has linearly independent columns then $\Psi^T \hat{Y}^T ( \Psi^T \hat{Y}^T)^\dagger = I$ and the solution $\hat{r}_{\mathrm{reg}}$ of \eqref{eqn:requiv} solves
$$
\hat{G} r = \Psi^T b.
$$
Again $\hat G$ is invertible because $\hat{Y} \Psi$ has linearly independent columns and the first claim follows. The matrix $A$ is positive definite which implies that $G$ is positive definite and $u_{\mathrm{reg}}= \Psi G^{-1}\Psi^T b$. The matrix $\Psi$ has orthonormal columns which implies $\Psi^T b = G \Psi^T u_{\mathrm{reg}}$. Since ${\hat u}_{\mathrm{reg}} = \Psi \hat{r}_{\mathrm{reg}}$ we can use the formula we have just shown and obtain
\begin{align*}
{\hat u}_{\mathrm{reg}} &= \Psi \hat{r}_{\mathrm{reg}} \\
& = \Psi \hat G^{-1} \Psi^T b \\
& = \Psi \hat G^{-1} G \Psi^T u_{\mathrm{reg}}\\
& = \Psi \hat G^{-1} (\hat G + (G - \hat G)) \Psi^T u_{\mathrm{reg}}\\
&= \Pi u_{\mathrm{reg}} + \Psi (\hat G^{-1} G - I)\Psi^T u_{\mathrm{reg}} \\
&= u_{\mathrm{reg}} + \Psi (\hat G^{-1} G - I)\Psi^T u_{\mathrm{reg}}
\end{align*}
where the last identity is due to $u_{\mathrm{reg}} \in \mathcal{S}_\rho $.
\end{proof}
In order to understand the effect of row sampling and why it can be a good approximation, we start by writing
\be \label{Gdec}
G = \sum_{j=1}^{kd} X_{(j)}^T X_{(j)} = X^T X \quad \mathrm{and} \quad A = \sum_{j=1}^{kd} Y_{(j)}^T Y_{(j)} = Y^T Y
\ee
as a sum of outer products of rows. Introduce for some sample size $c \in \mathbb{N}$ the iid random indices $\mathbf{i}_1, \dots, \mathbf{i}_c$ taking values in $[kd]$ with distribution
\begin{equation}
    \mathbb{P}(\mathbf{i}_j = i) = q_i
\end{equation}
for each $j \in [c]$ and $i \in [kd]$. Instead of \eqref{Gdec} we may consider the sketch
\begin{equation}\label{eq:sketchsum}
    \hat{G} = \frac{1}{c} \sum_{j=1}^{c} \frac{1}{q_{\mathbf{i}_j}}X_{(\mathbf{i}_j)}^T X_{(\mathbf{i}_j)}.
\end{equation}
If we define the random matrix $R \in \mathbb{R}^{kd \times c}$ and the random diagonal matrix $W \in \mathbb{R}^{c \times c}$ via
\begin{equation}\label{eq:RW}
    R_{ij} = \begin{cases} 1 & \mathrm{if} \quad \mathbf{i}_j = i \\
    0 & \mathrm{if} \quad \mathbf{i}_j \neq i \end{cases}, \qquad W_{jj} = \frac{1}{\sqrt{c q_{\mathbf{i}_j}}},
\end{equation}
then can put $S = RW$ and construct the sketch $\hat{G}$ as
\begin{equation}
    \hat{G} = X^T SS^T X = X^T R W^2 R^T X.
\end{equation}
Lastly, we can write $\hat{Y} = S^T Y$ as well as $\hat{X} = \hat{Y}\Psi = S^T Y \Psi$ for the sketches of $Y$ and $X$. A simple computation together with an application of the strong law of large numbers shows the following.
\begin{proposition}[Lemma 3 and 4 in \cite{DrineasMahoneyKannan}]\label{prop:asymptotics}
    Assume that the sampling probabilities satisfy the consistency condition
    \begin{equation}\label{eq:SamplingConsistency}
        X_{(j)} \neq 0 \implies q_j > 0 \qquad \forall j = 1, \dots , kd.
    \end{equation} 
    In this case we have for the matrix $\hat{G}$ as defined in \eqref{eq:sketchsum} that $\mathbb{E}[\hat{G}] = G$ and $\mathbb{E}[\|\hat{G}-G\|_F^2] = \mathcal{O}\left(c^{-1}\right)$. As a consequence, $\hat{G} \to G$ almost surely for $c \to \infty$.
\end{proposition}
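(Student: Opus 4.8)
The plan is to regard $\hat G$ as an empirical average of i.i.d.\ matrix-valued random variables and to verify the three assertions in turn. Write $\hat G = \frac{1}{c}\sum_{j=1}^{c} M_j$ with $M_j = q_{\mathbf{i}_j}^{-1}\, X_{(\mathbf{i}_j)}^T X_{(\mathbf{i}_j)}$, so that the $M_j$ are i.i.d.\ copies of $M_1 \in \mathbb{R}^{\rho\times\rho}$. First I would compute the common mean: $\mathbb{E}[M_1] = \sum_{i=1}^{kd} q_i\, q_i^{-1}\, X_{(i)}^T X_{(i)}$, where the sum is understood to run only over indices with $q_i > 0$. The consistency condition \eqref{eq:SamplingConsistency} ensures that the omitted indices satisfy $X_{(i)} = 0$ and hence contribute nothing, so $\mathbb{E}[M_1] = \sum_{i=1}^{kd} X_{(i)}^T X_{(i)} = G$ by \eqref{Gdec}; linearity of expectation then gives $\mathbb{E}[\hat G] = G$.

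For the second-moment bound I would write $\hat G - G = \frac{1}{c}\sum_{j=1}^{c}(M_j - G)$ and expand the squared Frobenius norm. Independence of the $M_j$ together with $\mathbb{E}[M_j - G] = 0$ annihilates all cross terms, leaving $\mathbb{E}[\|\hat G - G\|_F^2] = \frac{1}{c}\,\mathbb{E}[\|M_1 - G\|_F^2] = \frac{1}{c}\bigl(\mathbb{E}[\|M_1\|_F^2] - \|G\|_F^2\bigr)$. Since $X_{(i)}^T X_{(i)}$ is a rank-one outer product, $\|X_{(i)}^T X_{(i)}\|_F = \|X_{(i)}\|^2$, hence $\mathbb{E}[\|M_1\|_F^2] = \sum_{i:\, q_i > 0} q_i^{-1}\|X_{(i)}\|^4$, a finite constant independent of $c$. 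This establishes $\mathbb{E}[\|\hat G - G\|_F^2] = \mathcal{O}(c^{-1})$, with implied constant $\sum_{i:\, q_i>0} q_i^{-1}\|X_{(i)}\|^4 - \|G\|_F^2 \ge 0$.

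For the almost sure convergence I would apply the strong law of large numbers entrywise. Each entry $(M_1)_{ab} = q_{\mathbf{i}_1}^{-1} X_{\mathbf{i}_1 a} X_{\mathbf{i}_1 b}$ is integrable (indeed square-integrable, by the previous step), so Kolmogorov's SLLN yields $(\hat G)_{ab} \to G_{ab}$ almost surely for each of the finitely many pairs $(a,b) \in [\rho]\times[\rho]$; a finite intersection of almost sure events is almost sure, whence $\hat G \to G$ almost surely.

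I do not expect a genuine obstacle here: the argument is the standard computation behind Lemmas~3 and~4 of \cite{DrineasMahoneyKannan}. The only points that require a little care are restricting every index summation to $\{i : q_i > 0\}$ and invoking \eqref{eq:SamplingConsistency} to discard the remaining (zero) rows, and checking that the cross terms vanish so that the error decays like $c^{-1}$ rather than remaining $\mathcal{O}(1)$.
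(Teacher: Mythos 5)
Your argument is correct and is exactly the ``simple computation together with an application of the strong law of large numbers'' that the paper invokes (deferring the details to Lemmas 3 and 4 of the cited reference): the unbiasedness via the consistency condition, the vanishing cross terms giving the $\mathcal{O}(c^{-1})$ Frobenius-norm variance, and the entrywise SLLN. No discrepancies with the paper's approach.
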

Proposition \ref{prop:asymptotics} summarises the asymptotic properties of the used sketch. The condition \eqref{eq:SamplingConsistency} is very mild and holds for a wide range of distributions such as sampling from scaled row norms or uniform sampling. The convergence rate of $c^{-1}$ cannot be improved although the constant depends on the chosen probabilities $q_j$. In other words, as long as we sample all non-zero rows with positive probability we will obtain a sketch that has good asymptotic properties when considered as an approximation for $G$. However, in order to find good sampling probabilities $q_j$ we have to consider the non-asymptotic behaviour of the sketch. In fact, the main purpose of the regularisation/dimensionality reduction was to avoid situations where sampling a large number of rows is necessary. If $\rho \ll n$, then the regularised problem \eqref{eq:lowdim} has substantially fewer degrees of freedom than the high dimensional formulation in \eqref{uLS}. Consequently, the dependence of $G$ on the rows of $X$ is a lot smoother than the dependence of $A$ on $Y_{(j)}$. In other words, approximating $X$ by row sampling has a much smaller effect on the regularised solution $u_{\mathrm{reg}}$ than an approximation of $Y$ with the same sample size $c$ would have on the solution $u$ of the full system \eqref{aub}. For example, a much smaller number of rows needs to be sampled to obtain the correct null-space which results in a full-rank approximation of $G$. Note that, conditional on $\hat{G}$ being invertible, $u_{\mathrm{reg}} \in \mathcal{S}_\rho$ in combination with Lemma \ref{lem:uReg} implies
\begin{equation}\label{eq:error_general}
    \frac{\|u_{\mathrm{reg}} -{\hat u}_{\mathrm{reg}}\|}{\|u_{\mathrm{reg}}\|} \leq \|\hat G^{-1} G - I\|,
\end{equation}
so the randomisation error of the regularised problem is entirely controlled by low dimensional structures. This property is the key to a small sketching error and thus to an overall accurate approximation when only few samples are drawn. Using the notation from before and letting $X = U_X \Sigma_X V_X^T$ be the singular value decomposition of $X$, we can write the bound from \eqref{eq:error_general} as
\begin{equation*}
    \|\hat G^{-1} G - I\| = \|\Sigma^{-1}_X (U^T_X S S^T U_X)^{-1}\Sigma_X - I\|.
\end{equation*}
From the above formulation it becomes apparent that the error will be small if the sketch is constructed such that $(U_X S S^T U_X)^{-1} \approx I$ in spectral norm. We argue that this is essentially equivalent to $U_X S S^T U_X \approx I$. Indeed, we have the following.
\begin{lemma}
    If $\|U^T_X S S^T U_X - I\| < \varepsilon < 1$ then
\begin{equation*}
    1-\varepsilon \leq \frac{\| U^T_X S S^T U_X - I\|}{\|(U^T_X S S^T U_X)^{-1} - I \|} \leq 1+\varepsilon.
\end{equation*}
    \begin{proof}
        Under the condition of the lemma we know that $U_X S S^T U_X$ is invertible and that
        \begin{equation*}
            \|U^T_X S S^T U_X\| \leq \|I\| + \|U^T_X S S^T U_X - I\| = 1 + \varepsilon
        \end{equation*}
        which implies the upper bound by considering the estimate
        \begin{align*}
            \|U^T_X S S^T U_X - I\| &\leq \|U^T_X S S^T U_X\| \|(U^T_X S S^T U_X)^{-1} - I\| \\
            &\leq (1 + \varepsilon)\|(U^T_X S S^T U_X)^{-1} - I\|.
        \end{align*}
        Denote by $\lambda_i(U_X S S^T U_X)$ the $i$-th eigenvalue of $U_X S S^T U_X$. Then we may write
        \begin{align*}
            \|(U_X S S^T U_X)^{-1} - I\| &= \max_{i=1}^{\rho} \lvert 1- \lambda_i^{-1}(U^T_X S S^T U_X)\rvert \\ &= \max_{i=1}^{\rho} \frac{\lvert 1- \lambda_i(U^T_X S S^T U_X)\rvert}{\lambda_i(U^T_X S S^T U_X)} \\
            & \leq \frac{\| 1-U^T_X S S^T U_X\|}{\lambda_{\min}(U^T_X S S^T U_X)}
        \end{align*}
        where $\lambda_{\min}(U_X S S^T U_X)$ is the smallest eigenvalue. By assumption of the lemma \begin{equation*}
            \lvert 1- \lambda_{\min}(U^T_X S S^T U_X)\rvert \leq \varepsilon\implies \lambda_{\min}(U^T_X S S^T U_X) \geq 1-\varepsilon
        \end{equation*}
        which implies the claim after dividing by $\| 1-U^T_X S S^T U_X\|$ and taking the inverse.
    \end{proof}
\end{lemma}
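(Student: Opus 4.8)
The plan is to reduce the statement to the spectral theorem applied to the symmetric matrix $M \doteq U_X^T S S^T U_X \in \R^{\rho \times \rho}$. The first step is to record the basic structural facts: $S S^T$ is positive semidefinite, hence so is $M$, and the hypothesis $\|M - I\| < \varepsilon < 1$ forces every eigenvalue $\lambda_i(M)$ into the interval $(1-\varepsilon,\, 1+\varepsilon)$. In particular $M$ is invertible, $\|M\| = \lambda_{\max}(M) \le 1 + \varepsilon$, and $\lambda_{\min}(M) \ge 1 - \varepsilon > 0$. With these in hand both inequalities are short.

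For the upper bound I would exploit the algebraic identity $M - I = -M(M^{-1} - I)$, so that submultiplicativity of the spectral norm gives $\|M - I\| \le \|M\|\,\|M^{-1} - I\| \le (1+\varepsilon)\|M^{-1} - I\|$; dividing through yields $\|M - I\|/\|M^{-1} - I\| \le 1 + \varepsilon$. For the lower bound I would diagonalise $M$ and use that for a symmetric matrix the spectral norm is the spectral radius, so $\|M^{-1} - I\| = \max_i \lvert 1 - \lambda_i(M)^{-1}\rvert = \max_i \lvert 1 - \lambda_i(M)\rvert / \lambda_i(M) \le \|M - I\| / \lambda_{\min}(M) \le \|M - I\|/(1-\varepsilon)$; rearranging gives $\|M - I\|/\|M^{-1} - I\| \ge 1 - \varepsilon$. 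Combining the two displays proves the claim. (One should note $M^{-1} = I$ iff $M = I$, so the ratio is well defined precisely when the numerator is nonzero; the degenerate case $M = I$ gives $0/0$ and is excluded implicitly.)

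I do not expect a substantive obstacle here: the only points that require a moment of care are that the matrix $M$ is genuinely symmetric — which is what lets us identify operator norms with spectral radii of $M - I$ and $M^{-1} - I$ — and that its eigenvalues are bounded away from $0$ by $1-\varepsilon$, which is exactly what the hypothesis $\varepsilon < 1$ buys us. Both follow directly from the setup, so the proof is essentially a two-line spectral estimate once these observations are made explicit.
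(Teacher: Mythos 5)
Your proposal is correct and follows essentially the same route as the paper's own proof: the identity $M-I = -M(M^{-1}-I)$ with submultiplicativity for the upper bound, and the eigenvalue representation $\|M^{-1}-I\| = \max_i \lvert 1-\lambda_i\rvert/\lambda_i \le \|M-I\|/(1-\varepsilon)$ for the lower bound. Your explicit remark that the degenerate case $M=I$ must be excluded for the ratio to make sense is a small point of care the paper leaves implicit.
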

An approximation of $U^T_X S S^T U_X$ can be obtained by sampling with probabilities that are proportional to the statistical leverage scores
\begin{equation}\label{eq:leverage_scores}
    \ell_i(X) = \ell_i(U_X) = \|(U_X)_{(i)}\|^2,
\end{equation}
i.e. the row norms of the left singular vectors of $X$ \cite{DMMW}. At first sight it seems that taking sampling probabilities proportional to the leverage scores in \eqref{eq:leverage_scores} in order to obtain a sketch of \eqref{eq:lowdim} is very similar to using the leverage scores of $Y$ to obtain \eqref{SKuLS} from \eqref{uLS} as was proposed by Drineas and Mahoney in \cite{DrineasMahoneyER} for a similar problem. A key difference is that $X$ is tall and dense while $Y$ is sparse and thus $G$ is quite different to the initial stiffness matrix $A$. Consequently, an interpretation of the leverage scores from \eqref{eq:leverage_scores} in terms of effective stiffness \cite{AvronToledo} is, to the best of our knowledge, not possible. The following Lemma will be useful for our further developments.
\begin{lemma}[\cite{tropp2015} section 6.4]\label{lem:concentration}
    Assume that $S$ is constructed as before with sampling probabilities $q_i$ satisfying
    \begin{equation}\label{eq:SamplingBound}
        q_i \geq \beta \frac{\ell_i(X)}{\rho} \quad i = 1, \dots, kd
    \end{equation}
    for some $\beta \in (0,1]$. Then we have $\forall \varepsilon > 0$
    \begin{equation}\label{eq:concentration_inequality}
        \mathbb{P}\left(\|U^T_X S S^T U_X - I\| \geq \varepsilon \right) \leq 2 \rho  \exp\left(-\frac{3 c \beta \varepsilon^2 }{12 \rho + 4 \rho \varepsilon}\right)
    \end{equation}
\end{lemma}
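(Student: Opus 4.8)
The plan is to recognise the random matrix $U_X^T S S^T U_X$ as a sum of $c$ independent, rank-one, positive semidefinite matrices and then to invoke the matrix Chernoff bound of \cite{tropp2015}. Writing $S = RW$ and unpacking the definitions in \eqref{eq:RW}, one obtains the decomposition
\[
U_X^T S S^T U_X \;=\; \sum_{j=1}^{c} M_j, \qquad M_j \;\doteq\; \frac{1}{c\,q_{\mathbf{i}_j}}\,(U_X)_{(\mathbf{i}_j)}^T (U_X)_{(\mathbf{i}_j)},
\]
where the summands $M_1,\dots,M_c$ are independent and identically distributed because the indices $\mathbf{i}_1,\dots,\mathbf{i}_c$ are, and each $M_j$ is symmetric positive semidefinite of rank at most one.

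First I would compute the mean of a single summand. Averaging $q_i\,(c q_i)^{-1}\,(U_X)_{(i)}^T (U_X)_{(i)}$ over the indices with $q_i>0$, and noting that any row with $q_i = 0$ is necessarily a zero row — which is exactly what \eqref{eq:SamplingBound} guarantees, since $q_i = 0$ forces $\ell_i(X) = 0$ — gives $\mathbb{E}[M_j] = c^{-1} U_X^T U_X = c^{-1} I_\rho$ by orthonormality of the columns of $U_X$. Hence $\mathbb{E}\big[\sum_j M_j\big] = I_\rho$, so the mean matrix has $\lambda_{\min} = \lambda_{\max} = 1$. Second, and this is where the leverage-score sampling rule \eqref{eq:SamplingBound} is essential, I would bound each summand almost surely: since $M_j$ has rank one, $\|M_j\| = (c q_{\mathbf{i}_j})^{-1}\|(U_X)_{(\mathbf{i}_j)}\|^2 = \ell_{\mathbf{i}_j}(X)/(c q_{\mathbf{i}_j})$, which by \eqref{eq:leverage_scores} and \eqref{eq:SamplingBound} is at most $\rho/(c\beta)$ whenever the sampled row is nonzero (and $M_j = 0$ otherwise). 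So the uniform per-summand bound is $L = \rho/(c\beta)$.

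With the mean equal to $I_\rho$ and the per-summand bound $L$, the matrix Chernoff bound applies verbatim and yields, for $\varepsilon\in(0,1)$,
\[
\mathbb{P}\big(\lambda_{\max}(\textstyle\sum_j M_j)\geq 1+\varepsilon\big)\leq \rho\big[e^{\varepsilon}(1+\varepsilon)^{-(1+\varepsilon)}\big]^{1/L}, \qquad \mathbb{P}\big(\lambda_{\min}(\textstyle\sum_j M_j)\leq 1-\varepsilon\big)\leq \rho\big[e^{-\varepsilon}(1-\varepsilon)^{-(1-\varepsilon)}\big]^{1/L}.
\]
Since $U_X^T S S^T U_X$ is symmetric positive semidefinite, the event $\|U_X^T S S^T U_X - I\|\geq\varepsilon$ forces one of the two eigenvalue events above, so a union bound combines the two estimates into a single bound with prefactor $2\rho$. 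The last step is the elementary scalar inequality $\max\{e^{\varepsilon}(1+\varepsilon)^{-(1+\varepsilon)},\,e^{-\varepsilon}(1-\varepsilon)^{-(1-\varepsilon)}\}\leq \exp\!\big(-3\varepsilon^2/(12+4\varepsilon)\big)$; raising this to the power $1/L = c\beta/\rho$ turns each tail into $\rho\exp\!\big(-3c\beta\varepsilon^2/(12\rho+4\rho\varepsilon)\big)$ and gives \eqref{eq:concentration_inequality}. The remaining regime $\varepsilon\geq 1$ either follows from the same scalar estimate or is uninteresting, since then the lower tail has probability zero.

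I expect essentially no probabilistic difficulty, as all the randomness is absorbed into the cited matrix Chernoff theorem; the only genuine work is verifying the Bennett-type scalar inequality in the last step and doing the bookkeeping that collapses the two one-sided tails into the stated two-sided form with the correct constants. A minor point to be careful about is the treatment of zero rows (equivalently, indices $i$ with $\ell_i(X) = 0$), which is handled cleanly by the implication $q_i = 0 \Rightarrow \ell_i(X) = 0$ built into \eqref{eq:SamplingBound}.
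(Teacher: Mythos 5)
Your proposal is correct, but note that the paper does not prove this lemma at all -- it is imported verbatim from Tropp's monograph (the citation to \cite{tropp2015}, section 6.4), so there is no in-paper argument to compare against; what you have written is a legitimate standalone derivation. Your route goes through the matrix Chernoff bound for a sum of independent rank-one PSD summands: the decomposition $U_X^T S S^T U_X = \sum_{j=1}^c M_j$ is right, the mean computation $\mathbb{E}[\sum_j M_j] = I_\rho$ is justified exactly by the observation you make (condition \eqref{eq:SamplingBound} with $\beta>0$ forces $q_i = 0 \Rightarrow \ell_i(X)=0$, so unsampled rows contribute nothing), and the uniform bound $\|M_j\| = \ell_{\mathbf{i}_j}(X)/(c q_{\mathbf{i}_j}) \leq \rho/(c\beta)$ is where the leverage-score condition enters. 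The final scalar step also checks out: $(1+u)\log(1+u)-u \geq \tfrac{u^2/2}{1+u/3} \geq \tfrac{3u^2}{12+4u}$ handles the upper tail for all $u>0$, and $(1-u)\log(1-u)+u \geq u^2/2 \geq \tfrac{3u^2}{12+4u}$ handles the lower tail on $(0,1)$, with the $\varepsilon \geq 1$ regime dispatched as you say. By contrast, the source Tropp cites for this shape of bound proceeds through the matrix Bernstein inequality applied to the centered summands $M_j - \mathbb{E}[M_j]$, with uniform bound and variance proxy each of order $\rho/(c\beta)$; that route produces the Bernstein-type exponent (hence the $12\rho + 4\rho\varepsilon$ denominator) directly, whereas your Chernoff-based intermediate bound is actually slightly sharper and you then deliberately weaken it to match the stated constants. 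Either way the probability bound \eqref{eq:concentration_inequality} follows, so the only caveat is bookkeeping, which you have done carefully.
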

An important corollary of the above lemma is that a sketch which is constructed by sampling from leverage score probabilities will virtually always be invertible and therefore the sketched problem \eqref{eqn:requiv} has a unique solution. The following result states that this property is preserved even when the rows are re-weighted, an operation which changes the leverage scores.
\begin{proposition}\label{prop:full_rank}
    Let $\Gamma \in \mathbb{R}^{kd \times kd}$ be a diagonal matrix with positive entries, i.e. $\Gamma_{ii} > 0$ for each $i = 1, \dots, kd$. Assume that the sketching matrix $S$ is constructed with sampling probabilities $q_i = \rho^{-1}\ell_i(X)$. For the scaled sketch $\hat{H} = X^T \Gamma S S^T \Gamma X$ we have
    \begin{equation}
        \mathbb{P}(\hat{H} ~ \mathrm{is ~ invertible}) = \mathbb{P}(\hat{G} ~ \mathrm{is ~ invertible}) \geq 1 - 2 \rho  \exp\left(-\frac{3 c }{16 \rho}\right)
    \end{equation}
    \begin{proof}
    It is sufficient to show that
    \begin{equation*}
         \hat{H} ~ \mathrm{is ~ invertible} \iff \hat{G} ~ \mathrm{is ~ invertible} \iff U^T_X S S^T U_X ~ \mathrm{is ~ invertible}
    \end{equation*}
    because the probability bound follows immediately from
    \begin{equation*}
        \mathbb{P}(U^T_X S S^T U_X ~ \mathrm{is ~ invertible}) \geq 1-\mathbb{P}\left(\|U^T_X S S^T U_X - I\| \geq 1 \right)
    \end{equation*}
    after applying \eqref{eq:concentration_inequality} from Lemma \ref{lem:concentration}. The above matrices are always positive semi-definite and therefore invertibility is equivalent to positive definiteness. For any diagonal matrix $\Gamma$ it holds that $S^T \Gamma = \hat{\Gamma}S^T$ where $\hat{\Gamma}$ is a random diagonal matrix with entries $\hat{\Gamma}_{jj} = \Gamma_{\mathbf{i}_j\mathbf{i}_j}$. Thus for any $x \in \mathbb{R}^{\rho}$ we have
    \begin{equation*}
        x^T \hat{H} x = (\Sigma_X V_X^T x)^T U^T_X S \hat{\Gamma}^2 S^T U_X (\Sigma_X V_X^T x).
    \end{equation*}
    Since $X$ has full column rank we know that $\Sigma_X V_X^T$ corresponds to a change of basis and $\Sigma_X V_X^T x \neq 0$ whenever $x \neq 0$. It follows that $\hat{H}$ is positive definite if and only if $U^T_X S \hat{\Gamma}^2 S^T U_X$ is positive definite. As $\hat{\Gamma}$ is a diagonal such that $\hat{\Gamma}_{jj} > 0$ with probability $1$, the latter is equivalent to $U^T_X S S^T U_X$ being positive definite. The case of $\hat{G}$ is covered by $\Gamma = I$.
    \end{proof}
\end{proposition}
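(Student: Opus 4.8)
The plan is to reduce the invertibility of the large Gram-type matrices $\hat H$ and $\hat G$ to that of the single $\rho\times\rho$ matrix $U_X^T S S^T U_X$, and then to read off the probability bound from Lemma~\ref{lem:concentration}. Since $\hat H = X^T\Gamma SS^T\Gamma X$, $\hat G = X^TSS^TX$ and $U_X^TSS^TU_X$ are all positive semidefinite, I would begin by observing that for each of them invertibility is equivalent to positive definiteness, so it suffices to argue with quadratic forms.

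First I would establish the pathwise chain
\begin{equation*}
\hat H\ \text{p.d.}\iff \hat G\ \text{p.d.}\iff U_X^TSS^TU_X\ \text{p.d.}
\end{equation*}
Writing $X = U_X\Sigma_XV_X^T$ and using that $X=Y\Psi$ has full column rank (the standing hypothesis of this section), the factor $\Sigma_XV_X^T$ is an invertible $\rho\times\rho$ matrix, hence a linear bijection of $\mathbb{R}^\rho$. The key manipulation is to push $\Gamma$ through the sampling matrix: since $R^T\Gamma = \hat\Gamma R^T$ for the random diagonal matrix $\hat\Gamma$ with $\hat\Gamma_{jj}=\Gamma_{\mathbf i_j\mathbf i_j}$, and $W$ is diagonal, one gets $S^T\Gamma = \hat\Gamma S^T$, so that
\begin{equation*}
x^T\hat Hx = \bigl\|\,\hat\Gamma\,S^TU_X\,\Sigma_XV_X^Tx\,\bigr\|^2.
\end{equation*}
Because every diagonal entry of $\Gamma$ is strictly positive, $\hat\Gamma$ is an invertible diagonal matrix on every realisation of the indices, so $\hat\Gamma S^TU_Xz = 0\iff S^TU_Xz=0$; combining this with the bijectivity of $\Sigma_XV_X^T$ gives the first equivalence, and the second is the special case $\Gamma=I$, for which $\hat H=\hat G$. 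Since this holds path by path, the events $\{\hat H\ \text{invertible}\}$ and $\{\hat G\ \text{invertible}\}$ are literally the same, which gives the asserted equality of probabilities.

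It then remains to lower-bound $\mathbb{P}(U_X^TSS^TU_X\ \text{invertible})$. A symmetric matrix is positive definite whenever it lies within spectral distance $1$ of the identity, so
\begin{equation*}
\mathbb{P}\bigl(U_X^TSS^TU_X\ \text{invertible}\bigr)\ \geq\ 1-\mathbb{P}\bigl(\|U_X^TSS^TU_X-I\|\geq 1\bigr).
\end{equation*}
With the choice $q_i=\rho^{-1}\ell_i(X)$, condition~\eqref{eq:SamplingBound} of Lemma~\ref{lem:concentration} holds with $\beta=1$, and applying \eqref{eq:concentration_inequality} at $\varepsilon=1$ yields the tail bound $2\rho\exp\!\bigl(-3c/(12\rho+4\rho)\bigr)=2\rho\exp(-3c/(16\rho))$, which is exactly the claimed estimate.

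The only point that needs genuine care — and the only place anything could go wrong — is the re-weighting step: one must check that conjugating $SS^T$ by the positive diagonal $\hat\Gamma$ cannot introduce a null vector, and that this holds deterministically on every sample path, so that the two invertibility events coincide rather than merely having equal probability. Everything else is a direct substitution into the SVD of $X$ and into Lemma~\ref{lem:concentration}.
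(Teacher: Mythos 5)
Your proposal is correct and follows essentially the same route as the paper: reduce to positive definiteness, commute $\Gamma$ past the sampling matrix via $S^T\Gamma=\hat\Gamma S^T$, use the SVD and full column rank of $X$ to identify the invertibility events of $\hat H$, $\hat G$ and $U_X^TSS^TU_X$ pathwise, and then apply Lemma \ref{lem:concentration} with $\beta=1$, $\varepsilon=1$. The only (harmless) difference is that you spell out the constant computation $12\rho+4\rho=16\rho$ and state the pathwise coincidence of the events explicitly, which the paper leaves implicit.
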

Proposition \ref{prop:full_rank} states that re-scaling of rows doesn't affect the quality of the sketching matrix regarding its invertibility and after sampling $\rho \log(\rho)$ rows the probability of the sketch being singular decays exponentially fast with each additional draw. In practice this makes knowledge of $\ell_i(X)$ valuable because we only need to sample $\rho \log(\rho) + M$ rows for some moderately large $M$ and obtain a sketch that is virtually never singular. On the other hand, we need at least $\rho$ samples so that there is any hope in obtaining a non-singular matrix. The remarkable thing about Proposition \ref{prop:full_rank} is that the failure probability is \emph{independent} of both, the inner dimension $kd$ of the product $X^T X$ as well as the scaling matrix $\Gamma$ and equivalent to the bound which could be obtained by sampling from $\ell_i(\Gamma X)$. This suggests that a sketch which is constructed by drawing samples from $\ell_i(X)$ is not too different compared to sampling from $\ell_i(\Gamma X)$. This intuition is supported by the following result which describes the change in the leverage scores after re-weighting a single row.
\begin{proposition}[\cite{cohen2015} Lemma 5]\label{prop:leverage_dynamics}
    Let $\Gamma^{\langle i \rangle} \in \mathbb{R}^{kd \times kd}$ be a diagonal matrix with $\Gamma^{\langle i \rangle}_{ii} = \sqrt{\gamma} \in (0,1)$ and $\Gamma^{\langle i \rangle}_{jj} = 1$ for each $j \neq i$. Then 
    \begin{equation}\label{eq:IlevI}
        \ell_{i}(\Gamma^{\langle i \rangle}X) = \frac{\gamma \ell_i(X)}{1 - (1-\gamma)\ell_i(X)} \leq \ell_i(X) 
    \end{equation}
    and for $i \neq j$
    \begin{equation}\label{eq:IlevJ}
        \ell_{j}(\Gamma^{\langle i \rangle}X) = \ell_{j}(X) + \frac{(1-\gamma) \ell^2_{ij}(X)}{1 - (1-\gamma)\ell_i(X)} \geq \ell_j(X)
    \end{equation}
    where $\ell_{ij}(X) = (U_X U_X^T)_{ij}$ are the cross leverage scores.
\end{proposition}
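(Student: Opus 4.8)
The plan is to realise the single‑row re‑weighting as a symmetric rank‑one update of the Gram matrix $X^T X$ and then read off the new leverage scores via the Sherman–Morrison formula. Throughout I use that $X$ has full column rank (as elsewhere in this section), so that the leverage scores are the diagonal entries of the orthogonal projector $P = X(X^TX)^{-1}X^T = U_X U_X^T$ onto the column space of $X$; explicitly $\ell_i(X) = P_{ii} = X_{(i)}(X^TX)^{-1}X_{(i)}^T$ and $\ell_{ij}(X) = P_{ij} = X_{(i)}(X^TX)^{-1}X_{(j)}^T = X_{(j)}(X^TX)^{-1}X_{(i)}^T$, the last equality by symmetry of $P$. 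I also use the standard bound $0 \le \ell_i(X) \le 1$, valid since $U_X U_X^T$ is an orthogonal projector.

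First I would compute the perturbed Gram matrix. Since $\Gamma^{\langle i \rangle}$ multiplies only the $i$-th row of $X$ by $\sqrt{\gamma}$,
\[
    (\Gamma^{\langle i \rangle}X)^T(\Gamma^{\langle i \rangle}X) = X^TX + (\gamma-1)\,X_{(i)}^T X_{(i)},
\]
a rank‑one perturbation of $X^TX$. Applying Sherman–Morrison with $u = (\gamma-1)X_{(i)}^T$ and $v = X_{(i)}^T$ gives
\[
    \bigl((\Gamma^{\langle i \rangle}X)^T(\Gamma^{\langle i \rangle}X)\bigr)^{-1} = (X^TX)^{-1} - \frac{(\gamma-1)\,(X^TX)^{-1}X_{(i)}^T X_{(i)}(X^TX)^{-1}}{1 + (\gamma-1)\ell_i(X)},
\]
where the denominator is $1 + (\gamma-1)\ell_i(X) = 1 - (1-\gamma)\ell_i(X)$. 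Because $\gamma \in (0,1)$ and $0 \le \ell_i(X) \le 1$ we have $1 - (1-\gamma)\ell_i(X) \ge \gamma > 0$, so the update is invertible and the identity is valid.

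Next I would substitute this into the definition of the leverage scores of $\Gamma^{\langle i \rangle}X$. For $j \neq i$ the $j$-th row is unchanged, so $\ell_j(\Gamma^{\langle i \rangle}X) = X_{(j)}\bigl((\Gamma^{\langle i \rangle}X)^T(\Gamma^{\langle i \rangle}X)\bigr)^{-1}X_{(j)}^T$; plugging in the displayed inverse and using $X_{(j)}(X^TX)^{-1}X_{(i)}^T = X_{(i)}(X^TX)^{-1}X_{(j)}^T = \ell_{ij}(X)$ yields exactly \eqref{eq:IlevJ}. For $j = i$ there is an additional factor $\gamma$ from the scaled row, and
\[
    \ell_i(\Gamma^{\langle i \rangle}X) = \gamma\left(\ell_i(X) - \frac{(\gamma-1)\ell_i(X)^2}{1 + (\gamma-1)\ell_i(X)}\right) = \frac{\gamma\,\ell_i(X)}{1 - (1-\gamma)\ell_i(X)}
\]
after collecting the fraction over a common denominator, which is \eqref{eq:IlevI}.

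Finally, the two inequalities are immediate from the formulas: in \eqref{eq:IlevJ} the correction term equals $(1-\gamma)\ell_{ij}(X)^2 \ge 0$ divided by the strictly positive quantity $1-(1-\gamma)\ell_i(X)$, so $\ell_j(\Gamma^{\langle i \rangle}X) \ge \ell_j(X)$; in \eqref{eq:IlevI}, the bound $\ell_i(X) \le 1$ gives $1 - (1-\gamma)\ell_i(X) \ge \gamma$, whence $\ell_i(\Gamma^{\langle i \rangle}X) \le \ell_i(X)$. There is no genuine obstacle here: the computation is a direct application of Sherman–Morrison, and the only point demanding a moment's care is verifying that the denominator $1-(1-\gamma)\ell_i(X)$ is strictly positive so that the rank‑one update is invertible — which is precisely where the hypotheses $\gamma \in (0,1)$ and $\ell_i(X) \in [0,1]$ are used.
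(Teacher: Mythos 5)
Your proof is correct: the rank-one update $(\Gamma^{\langle i \rangle}X)^T(\Gamma^{\langle i \rangle}X) = X^TX + (\gamma-1)X_{(i)}^TX_{(i)}$, the Sherman--Morrison inversion with strictly positive denominator $1-(1-\gamma)\ell_i(X)\geq\gamma$, and the substitution into $\ell_j = X_{(j)}(X^TX)^{-1}X_{(j)}^T$ (with the extra factor $\gamma$ when $j=i$) reproduce \eqref{eq:IlevI} and \eqref{eq:IlevJ} exactly, and the two inequalities follow as you state. The paper itself gives no proof, citing Lemma 5 of Cohen et al.\ instead, and your argument is essentially the standard one used there, so there is nothing to add beyond noting that your implicit use of full column rank of $\Gamma^{\langle i \rangle}X$ is justified because $\Gamma^{\langle i \rangle}$ is invertible for $\gamma>0$.
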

Since $U_X$ has orthogonal columns, we have $\|v\| = \| U_X v\|$ for any $v \in \mathbb{R}^{\rho}$ and thus the cross leverage scores from the above Lemma satisfy
\begin{equation}\label{eq:crossleverageEq}
    \ell_{i}(X) = \sum_{j = 1}^{kd} \ell^2_{ij}(X).
\end{equation} 
For a general diagonal matrix $\Gamma$ as in Proposition \ref{prop:full_rank} we may without loss of generality assume that each entry lies in $(0,1]$ since we can divide the elements by their maximum. The re-weighting can thus be considered as a superposition of single row operations
\begin{equation}
    \Gamma = \prod_{i = 1}^{kd} \Gamma^{\langle i \rangle}
\end{equation}
where the $\Gamma^{\langle i \rangle}$ are as in Proposition \ref{prop:leverage_dynamics}. Since the $\Gamma^{\langle i \rangle}$ commute we can apply them in any order without changing the outcome. Considering Lemma \ref{lem:concentration}, if we could ensure that $\ell_i(X)$ isn't substantially smaller than $\ell_i(\Gamma X)$ then sampling from $q_i = \rho^{-1}\ell_i(X)$ will produce good sketches for $\Gamma X$. 
\paragraph{Large leverage scores $\ell_i(X) \approx 1$} Equation \eqref{eq:IlevI} shows that the relative change of the $i$-th leverage score after a re-weighting of the $i$-th row shrinks when $\ell_i(X) \to 1$. In the extreme case when $\ell_i(X) = 1$ the re-weighting has no effect. In addition to this stability property it trivially holds that $\ell_i(X) \leq 1$ which suggests that large leverage scores are fairly stable when rows are re-weighted.
\paragraph{Small leverage scores $\ell_i(X) \ll 1$} From Equation \eqref{eq:IlevJ} we know that the increase of $\ell_j(X)$ after re-weighting of row $i$ is proportional to $\ell_{ij}(X)$. If the entries of the scaling matrix $\Gamma$ don't vary too much, then \eqref{eq:crossleverageEq} suggests that we can expect the total increase, i.e. after applying $\Gamma^{\langle j \rangle}$ for each $j \neq i$ to be roughly of order $\ell_i(X)-\ell_i^2(X) \approx \ell_i(X)$. On the other hand, small $\ell_i(X)$ are fairly sensitive to re-weighting of row $i$ since $\ell_{i}(\Gamma^{\langle i \rangle} X) \approx (\Gamma^{\langle i \rangle}_{ii})^2\ell_{i}(X)$ in that case. Thus we can expect that the re-weighting of row $i$ will counterbalance the effects from re-weighting the other rows. In addition, we know that
\begin{equation*}
    \sum_{i = 1}^{kd} \ell_i(X) = \sum_{i = 1}^{kd} \ell_i(\Gamma X).
\end{equation*}
Since large leverage scores will likely be quite stable and $\ell_i(\Gamma X) \geq 0$ we would expect that not too many small leverage scores will become large. 

So far we have discussed the projection of the high-dimensional system without providing explicit details on how the basis $\Psi$ is selected. A desired property is to sustain a small projection error for all admissible parameter choices under the constraint $\rho \ll n$. Suitable options include subsets of the right singular vectors of $A$ or orthogonalised Krylov-subspace bases  \cite{Halko}, however these have to be computed for each individual parameter vector which can be detrimental to the speed of the solver. Alternatively, we opt for a generic basis exploiting the smoothness of $u$ on domains with smooth Lipschitz boundaries. A simple choice is to select the basis among the eigenvectors of the discrete Laplacian operator 
\be\label{Delta}
\Delta \doteq D^T Z^2_\Delta D,
\ee
for $Z^2_\Delta = \mathrm{diag}\bigl ([|\Omega_1|,\ldots,|\Omega_k|] \otimes 1_d\bigr )$. From $U_\Delta^T \Delta U_\Delta = \Sigma_\Delta$ and splitting the eigenvectors as
$$
U_\Delta=\bigl ( {U_\Delta}^{(1:n-\rho-1)}| \Psi \bigr ),
$$
such that the columns of $\Psi$ correspond to the last $\rho$ columns of $U_\Delta$, and respectively to the $\rho$ smallest eigenvalues $\{\lambda_{n-\rho-1}(\Delta), \ldots, \lambda_n(\Delta)\}$. In effect, with $\Delta$  constrained by the Dirichlet boundary conditions, the norm $\| \Delta \Psi^{(i)}\|$ provides a measure of the smoothness of $\Psi^{(i)}$ in the interior of $\Omega$. It is not difficult to see that this basis satisfies
$$
\|\Delta \Psi^{(i)}\| \geq \| \Delta \Psi^{(j)}\| \quad \mathrm{for} \quad \rho \geq i>j \geq 1.
$$
We remark that the computation of the basis is computationally very expensive for large $n$, as the eigen-decomposition of $\Delta$ is necessary, however this is only computed once, prior to the beginning of the simulation (offline stage) in an offline stage. After the matrix $\Psi$ has been obtained we can compute the leverage scores $\ell_{i}(Z_\Delta D \Psi)$. The Laplacian $\Delta$ differs from a general stiffness matrix $A$ only by different diagonal weights, i.e. $Z^2_\Delta$ is replaced by the diagonal matrix $Z^2 = Z^2_\Delta \mathrm{diag}\bigl[(p_1,\ldots,p_k) \otimes 1_d\bigr]$ where the $p_i$ contain information about the parameter from \eqref{pde}. Propositions \ref{prop:full_rank} and \ref{prop:leverage_dynamics} along with the developments thereafter suggest that the Laplacian leverage scores $\ell_{i}(Z_\Delta D \Psi)$ can nonetheless be used to construct sketches $\hat{G} = X^T S S^T X$ of the projected matrix $G = X^T X = \Psi^T Y^T Y \Psi$ because the difference in the stiffness matrices is just a diagonal weighting.
\section{Complexity and error analysis}\label{sec:algorithm}
Motivated by the developments from the previous sections we propose the following algorithm for computing solutions to a sequence of $N$ problem of the form \eqref{pde}. We assume that each problem is specified by its parameter vector $z^{(t)} \in \mathbb{R}^{kd}$ for $t = 1, \dots, N$ (see section \ref{Sec:assfem}). 
\begin{algorithm}[ht]
\SetKwInOut{Input}{input}\SetKwInOut{Output}{output}
\Input{Matrices $D \in \mathbb{R}^{kd \times n}$, $\Psi \in \mathbb{R}^{n \times \rho}$, data vector  $\Psi^Tb \in \mathbb{R}^{\rho}$, and sampling probabilities $q_i = \rho^{-1}\ell_{i}(Z_\Delta D\Psi)$ (offline)}
\Output{Parameter dependent solutions $\hat r^{(t)} \in \mathbb{R}^\rho$ where $t = 1, \dots, N$}
  \SetAlgoLined
  Online Simulation\;
  \For{$t\leftarrow 1$ \KwTo $N$}{
  \Input{Parameter vector $z^{(t)} \in \mathbb{R}^{k}$, sample size $c$}
  draw row indices $\mathbf{i}_1, \dots \mathbf{i}_c \stackrel{\mathrm{iid}}{\sim} q$ from $[kd]$\;
  get the sampled indices $J = \bigcup_{j=1}^{c}\{\mathbf{i}_j\}$\; 
  set $c'=\lvert J \rvert$ and write $J = \{\mathbf{j}_1, \ldots, \mathbf{j}_{c'}\}$\;
  compute the frequencies $m_j = \sum_{k=1}^{c} \delta(\mathbf{i}_k=\mathbf{j}_j)$ for $j=1,\ldots,c'$\;
  find $M^2_{jj}=c^{-1} m_{j}q^{-1}_{\mathbf{j}_j}$ for $j=1,\ldots, c'$ and the diagonal matrix $M$\;
  find $\hat{Z}^{2}_{jj} = z^{(t)}_{\mathbf{j}_j}$ for $j=1,\ldots, c'$ and the diagonal matrix $\hat{Z}^2$\;
  assemble the $c' \times \rho$ matrix $\hat{X} = M \hat{Z} D_{(J)} \Psi$\;
  compute reduced system $\hat G = \hat{X}^T \hat{X}$\;
  compute and store $\hat{r}^{(t)} \leftarrow \mathrm{solve}(\hat{G}, \Psi^Tb)$\;
 }
  \caption{Algorithm for simulating the low-dimensional projected solution of the FEM equations for different choices of parameter vectors $p$. Note that as we are sampling with replacement, $c' \leq c$. In the above $\delta (\cdot)$ denotes the indicator function where $\delta (E) = 1$ if the event $E$ has occurred and it is zero otherwise  otherwise. $D_{(J)}$ is the sub-matrix of $D$ whose rows are the (ordered) elements of $J$}.
  \label{alg:sketch1}
\end{algorithm}

The complexity and approximation error of Algorithm \ref{alg:sketch1} are obviously linked. The more samples we draw the better we expect our solutions to be. Although the size of the reduced system matrix $G$ (and therefore its sketched counterpart $\hat{G}$ as well) is independent of $c$, the computational burden for building $\hat{G}$ is higher when drawing more samples. More precisely, we need:
\begin{itemize}
    \item $\mathcal{O}(c)$ operations in order to find $\mathbf{i}_1, \dots \mathbf{i}_c \stackrel{\mathrm{iid}}{\sim} q$. This is possible because $q$ is fixed and we can perform the necessary pre-processing offline \cite{Bringmann2017}.
    \item $\mathcal{O}(c)$ operations for computing the sampled indices $\{\mathbf{j}_1, \dots, \mathbf{j}_{c'}\}$ and their frequencies $m_j$ as this requires a single loop through the set $\{\mathbf{i}_1, \dots, \mathbf{i}_c\}$ of initial samples.
    \item $\mathcal{O}(c')$ operation for assembling the diagonal matrices $M$ and $\hat{Z}$.
    \item $\mathcal{O}(c'\rho)$ operations for computing $M \hat{Z} D_{(J)} \Psi$. This can be achieved since computing $M \hat{Z} D_{(J)}$ requires $\mathrm{nnz}(D_{(J)}) = \mathcal{O}(c')$ multiplications and $\rho \cdot \mathrm{nnz}(M \hat{Z}D_{(J)}) = \rho \cdot \mathrm{nnz}(D_{(J)}) = \mathcal{O}(\rho c')$ multiplications are enough for computing $[M \hat{Z} D_{(J)}] \Psi $ due to sparsity of $D$. 
    \item $\mathcal{O}(c'\rho^2)$ operations in order to build $\hat{G}$ which corresponds to the cost of multiplication for dense matrices.
    \item $\mathcal{O}(\rho^3)$ operations for solving $\hat{G}r = \Psi^Tb$ with a direct method.
\end{itemize}
The sketch $\hat{G}$ will be singular if we draw $c' < \rho$ distinct samples which means that building the sketch $\hat{G}$ dominates the complexity of Algorithm \ref{alg:sketch1}. If the sampling probabilities are a good approximation in the sense that $\beta$ in Lemma \ref{lem:concentration} can be chosen close to $1$, then we need $c = \mathcal{O}(\varepsilon^{-2}\rho \log(\rho))$ samples in order to have a provably controlled error. The worst case, i.e. the the largest increase of $\ell_i(X)$, will be observed if $z^{(t)}_{j} \ll z^{(t)}_{i}$ for $j \neq i$. A parameter $p$ corresponding to such a situation essentially renders the implementation of the classical Galerkin FEM problematic, as $\kappa(A)$ scales to $p_{\max}/p_{\min}$, see Theorem 5.2 in \cite{KamenskiHuangXu}  The following theorem summarises the findings of this section.
\begin{theorem}\label{thm:randerror}
    Let $\varepsilon \in (0,1)$ and $\beta \in (0,1]$ is such that the sampling probabilities $q_i$ from Algorithm \ref{alg:sketch1} satisfy \eqref{eq:SamplingBound}, i.e.
    \begin{equation*}
        q_i \geq \beta \frac{\ell_i(Z D\Psi)}{\rho} \quad i = 1, \dots, kd
    \end{equation*}
    where $Z^2 = \mathrm{diag}(z^{(t)})$. Let $G = X^T X = \Psi^T D^T Z^2 D\Psi$ be the reduced system matrix corresponding to parameter $z^{(t)}$ and $\kappa(G)$ its condition number. For the choice $c = 15\rho \log(15 \rho) \beta^{-1} \varepsilon^{-2} $ Algorithm \ref{alg:sketch1} requires $\mathcal{O}(\rho^3 \log( \rho)\beta^{-1} \varepsilon^{-2} )$ operations and outputs, with probability exceeding $0.999$, a vector $\hat{r}^{(t)}$ that satisfies 
    \begin{equation}
        \frac{\|\hat{r}^{(t)} - G^{-1}\Psi^Tb\|}{\|G^{-1}\Psi^Tb\|} \leq \sqrt{\kappa(G)}\frac{\varepsilon}{1-\varepsilon}.
    \end{equation}
    \begin{proof}
        As stated before, the complexity of Algorithm \ref{alg:sketch1} is $\mathcal{O}(c\rho^2)$ which immediately implies that it requires $\mathcal{O}(\rho^3 \log( \rho)\beta^{-1} \varepsilon^{-2} )$ operations for a single query. It remains to prove the error bound. In view of \eqref{eq:error_general} and the developments thereafter it follows, conditional on $\hat{G}$ being invertible, that
        \begin{align*}
            \frac{\|\hat{r}^{(t)} - G^{-1}\Psi^Tb\|}{\|G^{-1}\Psi^Tb\|} &\leq \|\Sigma^{-1}_{X} (U^T_{X} S S^T U_{X})^{-1}\Sigma_{X} - I\| \\
            & \leq \kappa(X)\|(U^T_{X} S S^T U_{X})^{-1} - I\|\\
            & \leq \kappa(X)\frac{1}{1-\varepsilon}\|U^T_{X} S S^T U_{X} - I\|.
        \end{align*}
        Since $\kappa^2(X) = \kappa(G)$ we only need to show that 
        \begin{equation*}
            \mathbb{P}(\|U^T_{X} S S^T U_{X} - I\| \geq \varepsilon) \leq 0.001
        \end{equation*}
        because $\hat{G}$ is necessarily invertible on that event which implies validity of the estimates from before. But plugging the value for $c$ into \eqref{eq:concentration_inequality} we obtain for any $\rho \geq 1$
        \begin{equation*}
            \mathbb{P}(\|U^T_{X} S S^T U_{X} - I\| \geq \varepsilon) \leq \frac{2}{15}\exp\left(-\frac{29}{16}\log(15\rho)\right) < 0.001.
        \end{equation*}
    \end{proof}
\end{theorem}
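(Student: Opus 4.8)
The plan is to treat the two assertions of the theorem separately, and to have the randomness enter only through a single concentration estimate. For the complexity statement I would simply add up the per-query operation counts already enumerated in the bullet list preceding the theorem: drawing $\mathbf{i}_1,\dots,\mathbf{i}_c$ costs $\mathcal{O}(c)$ with offline preprocessing, extracting $J$ and the frequencies $m_j$ costs $\mathcal{O}(c)$, assembling $M,\hat Z$ costs $\mathcal{O}(c')$, forming $\hat X = M\hat Z D_{(J)}\Psi$ costs $\mathcal{O}(c'\rho)$ by sparsity of $D$, building $\hat G=\hat X^T\hat X$ costs $\mathcal{O}(c'\rho^2)$, and solving $\hat G r=\Psi^T b$ costs $\mathcal{O}(\rho^3)$. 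Since $c'\le c$ and one needs $c'\ge\rho$ for $\hat G$ to be nonsingular, the dominant term is $\mathcal{O}(c\rho^2)$; substituting $c=15\rho\log(15\rho)\beta^{-1}\varepsilon^{-2}$ gives $\mathcal{O}(\rho^3\log(\rho)\beta^{-1}\varepsilon^{-2})$.

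For the error bound I would first reduce the claim to a statement about $U_X^T SS^T U_X$. Because $\Psi$ has orthonormal columns, Lemma \ref{lem:uReg} identifies $\hat r^{(t)}=\hat r_{\mathrm{reg}}=\hat G^{-1}\Psi^T b$ and $G^{-1}\Psi^T b=r_{\mathrm{reg}}$, and the relative error of the reduced coefficients coincides with $\|u_{\mathrm{reg}}-\hat u_{\mathrm{reg}}\|/\|u_{\mathrm{reg}}\|$, which by \eqref{eq:error_general} is bounded by $\|\hat G^{-1}G-I\|$ on the event that $\hat G$ is invertible. Writing $X=U_X\Sigma_X V_X^T$ and $\hat G=X^T SS^T X$, one has $\hat G^{-1}G-I=V_X\Sigma_X^{-1}\big[(U_X^T SS^T U_X)^{-1}-I\big]\Sigma_X V_X^T$, so submultiplicativity of the spectral norm (with $V_X$ orthogonal) yields $\|\hat G^{-1}G-I\|\le\kappa(X)\,\|(U_X^T SS^T U_X)^{-1}-I\|$. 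On the event $\{\|U_X^T SS^T U_X-I\|<\varepsilon\}$ the Lemma comparing the forward and inverse perturbations of $U_X^T SS^T U_X$ bounds $\|(U_X^T SS^T U_X)^{-1}-I\|$ by $(1-\varepsilon)^{-1}\|U_X^T SS^T U_X-I\|\le\varepsilon/(1-\varepsilon)$, and since $G=X^TX$ gives $\kappa(X)=\sqrt{\kappa(G)}$ the claimed bound $\sqrt{\kappa(G)}\,\varepsilon/(1-\varepsilon)$ follows — on that event.

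It then remains to show $\mathbb{P}\big(\|U_X^T SS^T U_X-I\|\ge\varepsilon\big)\le 0.001$, because on the complementary event $U_X^T SS^T U_X$ is within $\varepsilon<1$ of the identity, hence positive definite, hence $\hat G$ is invertible (the equivalence in Proposition \ref{prop:full_rank}), which legitimises the whole deterministic chain of the previous paragraph. This is where Lemma \ref{lem:concentration} does the work: the hypothesis $q_i\ge\beta\ell_i(X)/\rho$ with $X=ZD\Psi$ is assumed in the theorem, so substituting $c=15\rho\log(15\rho)\beta^{-1}\varepsilon^{-2}$ into \eqref{eq:concentration_inequality} turns the exponent into $-45\log(15\rho)/(12+4\varepsilon)$; using $\varepsilon<1$ so that $12+4\varepsilon<16$, I would rewrite $2\rho(15\rho)^{-45/(12+4\varepsilon)}=\tfrac{2}{15}(15\rho)^{\,1-45/(12+4\varepsilon)}\le\tfrac{2}{15}(15\rho)^{-29/16}\le\tfrac{2}{15}\,15^{-29/16}<0.001$, the last inequality using $\rho\ge1$.

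I expect the only genuinely delicate point to be the conditioning: the bound \eqref{eq:error_general} is vacuous unless $\hat G$ is invertible, so the argument has to be organised so that the ``good'' event carrying almost all the probability mass of the concentration inequality is precisely the event on which invertibility — and hence every estimate in the deterministic part — is valid. Everything else is bookkeeping (the operation count) or a direct numerical substitution into Lemma \ref{lem:concentration} using the crude bounds $12+4\varepsilon<16$ and $\log(15\rho)\ge\log15$.
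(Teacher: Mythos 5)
Your proposal is correct and follows essentially the same route as the paper: the operation count from the bullet list with dominant term $\mathcal{O}(c\rho^2)$, the reduction via \eqref{eq:error_general} and the SVD of $X$ to $\kappa(X)\|(U_X^T S S^T U_X)^{-1}-I\|$, the perturbation lemma supplying the $(1-\varepsilon)^{-1}$ factor, $\kappa(X)=\sqrt{\kappa(G)}$, and the identical numerical substitution of $c$ into Lemma \ref{lem:concentration}. Your more explicit treatment of the invertibility event and of the passage from the $u$-error to the $r$-error merely spells out what the paper compresses into a single remark, so nothing further is needed.
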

Algorithm \ref{alg:sketch1} is most attractive when we can tolerate an error somewhere between 1\% to 10\% in which case we can obtain the solution to a single query in about $\mathcal{O}(\beta^{-1} \rho^3 \log(\rho))$ time. In practice the value for $\beta$ is unobtainable since it requires knowledge of the true leverage scores but considering Lemma \ref{prop:leverage_dynamics} and the arguments thereafter, we expect that for a moderately large $\beta^{-1}$ the required bound will hold for all but a few small leverage scores. The statement in Lemma \ref{lem:concentration} is rather pessimistic when there are few misaligned leverage scores since it requires a uniform bound. For practical purposes we expect that $\beta^{-1}$ can be substituted with a small constant and we take $\varepsilon = 0.1$ which will ensure reglarity of the sketch.
Up until now we have only considered the randomisation error of the sketched solution, i.e. we have analysed $\|\hat{u}_{\mathrm{reg}}-u_{\mathrm{reg}}\|$. However, the the total error of $\hat{u}_{\mathrm{reg}}$ compared to the high dimensional solution $u$ of \eqref{aub} has two components. If we decompose the process into two steps
\begin{align}
    \min_{u \in \mathbb{R}^n} \|Y u - (Y^T)^\dag b\|^2 \quad &\xrightarrow[\|u_{\mathrm{opt}} - u_{\mathrm{reg}}\|]{\makebox[2cm]{\scriptsize Projection}} \quad  \min_{u \in \mathcal{S}_\rho} \|Y u - (Y^T)^\dag b\|^2& \label{eq:projstep}\\[6pt] 
    \min_{u \in \mathcal{S}_\rho} \|Y u - (Y^T)^\dag b\|^2 \quad &\xrightarrow[\|\hat{u}_{\mathrm{reg}}-u_{\mathrm{reg}}\|]{\makebox[2cm]{\scriptsize Sketching}} \quad \min_{u \in \mathcal{S}_\rho} \|\hat{Y} \Pi u - (\Psi^T \hat{Y}^T)^\dag \Psi^T b\|^2, & \label{eq:randstep}
\end{align}
it becomes apparent that even with a perfect sketch, i.e. if we solved the noiseless projected problem \eqref{Piu} and \eqref{eq:randstep} is negligible, we could still not achieve an error smaller than $\|u_{\mathrm{opt}} - \Pi u_{\mathrm{opt}}\|$. The next result tells us that the error from \eqref{eq:projstep} is close to the optimal one.
\begin{theorem}\label{thm:projerror}
Let $u_{\mathrm{opt}}$ be the solution of \eqref{aub} and $u_{\mathrm{reg}}$ be the optimum of \eqref{Piu}. If $\kappa (A)$ is the condition number of the stiffness matrix $A$ and $\Pi = \Psi \Psi^T$ the projection ont $\mathcal{S}_\rho$, then
\begin{equation*}
        \|u_{\mathrm{opt}} - u_{\mathrm{reg}}\| \leq \left(1 + \sqrt{\kappa(A)}\right) \|u_{\mathrm{opt}} - \Pi u_{\mathrm{opt}}\|.
    \end{equation*}
    \begin{proof}
        Recall that $A = Y^T Y$ and $G = X^T X = \Psi^T Y^T Y \Psi$. From the developments in Lemma \ref{lem:uReg} we know that $u_{\mathrm{reg}} = \Psi G^{-1} \Psi^T b$. We may write as before $X = U_X \Sigma_X V_X^T$ so that $G^{-1} = V_X \Sigma_X^{-2} V_X^T$ and
        \begin{align*}
            \|u_{\mathrm{opt}} - u_{\mathrm{reg}}\| &= \|u_{\mathrm{opt}} - \Psi G^{-1} \Psi^T b\| \\
            &= \|u_{\mathrm{opt}} - \Psi G^{-1} \Psi^T Au_{\mathrm{opt}}\| \\
            &= \|u_{\mathrm{opt}} - \Psi G^{-1} \Psi^T A[\Pi + (I - \Pi)]u_{\mathrm{opt}}\| \\
            &\leq \|u_{\mathrm{opt}} - \Psi G^{-1} \Psi^T A\Psi \Psi^T u_{\mathrm{opt}}\| + 
            \|\Psi G^{-1} \Psi^T A(I - \Pi)u_{\mathrm{opt}}\| \\
            &= \|u_{\mathrm{opt}} - \Pi u_{\mathrm{opt}}\| + \left\|\Psi V_X \Sigma_X^{-2} V_X^T (U_X \Sigma_X V_X^T)^T Y (I - \Pi)u_{\mathrm{opt}} \right\| \\
            &\leq  \|u_{\mathrm{opt}} - \Pi u_{\mathrm{opt}}\| \left( 1 + \|\Psi V_X \Sigma_X^{-1} U_X^T Y \|  \right).
        \end{align*}
If we write $\lambda_{\min}(A)$ and $\lambda_{\max}(A)$ for the smallest and largest eigenvalues of $A$, then it must hold that
\begin{equation*}            
\lambda_{\min}(A) \leq  \lambda_{\min}(G) \leq \lambda_{\max}(G) \leq \lambda_{\max}(A) 
        \end{equation*} 
        because $\Psi$ has orthogonal columns. Indeed, if $\mathbb{S}^{n-1}\doteq \{w \in \mathbb{R}^n:\|w\|=1\}$ is the $n$-dimensional unit sphere, then
        \begin{equation*}
            \min_{w \in \mathbb{S}^{n-1}} w^T A w\leq \min_{w \in \mathcal{S}_\rho \cap \mathbb{S}^{n-1}}w^T A w \leq \max_{w \in \mathcal{S}_\rho \cap \mathbb{S}^{n-1}}w^T A w \leq \max_{w \in \mathbb{S}^{n-1}}w^T A w
        \end{equation*}
        is obviously true. Since the columns of $\Psi$ form an ONB of $\mathcal{S}_\rho$ we have
        \begin{gather*}
             \min_{w \in \mathcal{S}_\rho \cap \mathbb{S}^{n-1}}w^T A w =  \min_{w \in \mathbb{S}^{\rho-1}}w^T \Psi^T A \Psi w = \min_{w \in \mathbb{S}^{\rho-1}}w^T G w = \lambda_{\min}(G) \\
             \max_{w \in \mathcal{S}_\rho \cap \mathbb{S}^{n-1}}w^T A w = \max_{w \in \mathbb{S}^{\rho-1}}w^T \Psi^T A \Psi w = \max_{w \in \mathbb{S}^{\rho-1}}w^T G w = \lambda_{\max}(G). 
        \end{gather*}
        Thus, $\|\Sigma^{-1}_X \|^2 =  \lambda^{-1}_{\min}(G) \leq \lambda^{-1}_{\min}(A)$. Clearly we also have $\|Y\|^2 = \lambda_{\max}(A)$. Due to orthogonality we know that $\|\Psi\|=\|V_X\|=\|U_X\|=1$. Combining those estimates we obtain
        \begin{equation*}
            \|\Psi V_X \Sigma_X^{-1} U_X^T Y \| \leq \sqrt{\frac{\lambda_{\max}(A)}{\lambda_{\min}(G)}} \leq \sqrt{\kappa(A)},
        \end{equation*}
        which yields the desired bound.
    \end{proof}
\end{theorem}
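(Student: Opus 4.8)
The plan is to use the explicit formula $u_{\mathrm{reg}} = \Psi G^{-1}\Psi^T b$ established in Lemma~\ref{lem:uReg}, together with the consistency relation $b = A u_{\mathrm{opt}}$, and then to split $u_{\mathrm{opt}}$ into its component in $\mathcal{S}_\rho$ and its orthogonal complement. First I would observe the cancellation $\Psi G^{-1}\Psi^T A \Pi = \Psi G^{-1}(\Psi^T A \Psi)\Psi^T = \Psi G^{-1} G \Psi^T = \Pi$, so that writing $u_{\mathrm{opt}} = \Pi u_{\mathrm{opt}} + (I-\Pi)u_{\mathrm{opt}}$ inside $u_{\mathrm{reg}} = \Psi G^{-1}\Psi^T A u_{\mathrm{opt}}$ leaves $\Pi u_{\mathrm{opt}}$ exactly plus a single error term $\Psi G^{-1}\Psi^T A (I-\Pi)u_{\mathrm{opt}}$. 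A triangle inequality then gives
\[
\|u_{\mathrm{opt}}-u_{\mathrm{reg}}\| \le \|u_{\mathrm{opt}}-\Pi u_{\mathrm{opt}}\| + \|\Psi G^{-1}\Psi^T A (I-\Pi)u_{\mathrm{opt}}\|,
\]
and the task reduces to bounding the operator norm of $\Psi G^{-1}\Psi^T A$, since $\|(I-\Pi)u_{\mathrm{opt}}\| = \|u_{\mathrm{opt}}-\Pi u_{\mathrm{opt}}\|$.

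For that term I would use $A = Y^T Y$ so that $\Psi^T A = X^T Y$ with $X = Y\Psi$, and insert the SVD $X = U_X \Sigma_X V_X^T$ to simplify $G^{-1} X^T = V_X \Sigma_X^{-2} V_X^T \cdot V_X \Sigma_X U_X^T = V_X \Sigma_X^{-1} U_X^T$, whence $\Psi G^{-1}\Psi^T A = \Psi V_X \Sigma_X^{-1} U_X^T Y$. Submultiplicativity of the spectral norm together with $\|\Psi\| = \|V_X\| = \|U_X\| = 1$ reduces the bound to $\|\Sigma_X^{-1}\|\,\|Y\| = \lambda_{\min}(G)^{-1/2}\,\lambda_{\max}(A)^{1/2}$.

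The one genuinely substantive ingredient is the eigenvalue interlacing $\lambda_{\min}(A) \le \lambda_{\min}(G)$ (and, symmetrically, $\lambda_{\max}(G)\le\lambda_{\max}(A)$), which I would derive from the Courant--Fischer min--max principle: because the columns of $\Psi$ form an orthonormal basis of $\mathcal{S}_\rho$, minimising $w^T A w$ over $\mathcal{S}_\rho \cap \mathbb{S}^{n-1}$ equals $\min_{\|v\|=1} v^T \Psi^T A \Psi v = \lambda_{\min}(G)$, and restricting to the subspace can only raise the minimum above $\lambda_{\min}(A)$. Substituting this yields $\|\Psi G^{-1}\Psi^T A\| \le \sqrt{\lambda_{\max}(A)/\lambda_{\min}(A)} = \sqrt{\kappa(A)}$, and combining with the triangle inequality above gives the claimed bound $(1+\sqrt{\kappa(A)})\,\|u_{\mathrm{opt}}-\Pi u_{\mathrm{opt}}\|$. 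I do not anticipate a real obstacle; the only points requiring care are keeping the two quadratic forms straight ($G$ acts on $\mathbb{R}^\rho$ while $A$ acts on $\mathbb{R}^n$) and ensuring that the Rayleigh quotient is restricted to the subspace $\mathcal{S}_\rho$ itself rather than to some coordinate subspace when invoking min--max.
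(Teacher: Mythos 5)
Your proposal is correct and follows essentially the same route as the paper's own proof: the identity $u_{\mathrm{reg}}=\Psi G^{-1}\Psi^T A u_{\mathrm{opt}}$ combined with the split $u_{\mathrm{opt}}=\Pi u_{\mathrm{opt}}+(I-\Pi)u_{\mathrm{opt}}$ and the cancellation $\Psi G^{-1}\Psi^T A\Pi=\Pi$, followed by the SVD simplification $\Psi G^{-1}\Psi^T A=\Psi V_X\Sigma_X^{-1}U_X^T Y$ and the Rayleigh-quotient (Courant--Fischer) argument giving $\lambda_{\min}(A)\leq\lambda_{\min}(G)$, hence the bound $\sqrt{\kappa(A)}$. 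No gaps; your closing caveats about which space each quadratic form acts on are exactly the points the paper also handles.
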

If the subspace $\mathcal{S}_{\rho}$ is such that the relative projection error is small, then the norm of $u_{\mathrm{reg}}$ will be similar to the norm of $u_{\mathrm{opt}}$. More precisely,
\begin{equation*}
    \frac{\|u_{\mathrm{reg}}-u_{\mathrm{opt}}\|}{\|u_{\mathrm{opt}}\|} \leq \delta \implies \frac{\|u_{\mathrm{reg}}\|}{\|u_{\mathrm{opt}}\|} \in [1-\delta, 1+\delta]
\end{equation*}
so that Theorem \ref{thm:randerror} applies to $\|u_{\mathrm{reg}}-\hat{u}_{\mathrm{reg}}\|/\|u_{\mathrm{opt}}\|$ with a small $\delta$-dependent constant. By combining the previous two theorems we obtain the following.
\begin{corollary}\label{cor:totalerror}
    Let $\varepsilon_{\mathrm{R}} \in (0,1)$ and assume that the assumptions of Theorem \ref{thm:randerror} are satisfied for $\varepsilon = \varepsilon_{\mathrm{R}}$. If $u_{\mathrm{opt}}$ is the solution of \eqref{aub} and the subspace $\mathcal{S}_{\rho}$ is such that
    \begin{equation*}
       \|u_{\mathrm{opt}} - \Pi u_{\mathrm{opt}}\| \leq \|u_{\mathrm{opt}}\| \varepsilon_{\mathrm{P}}
    \end{equation*}
    for some $\varepsilon_{\mathrm{P}} \in (0,1)$. Then the total error of the solutions $\hat{u}_{\mathrm{reg}} = \Psi \hat{r}$ produced by Algorithm \ref{alg:sketch1} satisfy the bound
    \begin{equation}
        \frac{\|u_{\mathrm{opt}} - \hat{u}_{\mathrm{reg}}\|}{\|u_{\mathrm{opt}}\|} \leq \left(1+\varepsilon_{\mathrm{P}}\sqrt{\kappa(A)}\right)\sqrt{\kappa(G)}\frac{\varepsilon_{\mathrm{R}}}{1-\varepsilon_{\mathrm{R}}} +
         \left(1+\sqrt{\kappa(A)}\right) \varepsilon_{\mathrm{P}}.
    \end{equation}
    \begin{proof}
        We can start with the estimate
        \begin{equation*}
            \frac{\|u_{\mathrm{opt}} - \hat{u}_{\mathrm{reg}}\|}{\|u_{\mathrm{opt}}\|} \leq \frac{\|u_{\mathrm{opt}} - u_{\mathrm{reg}}\|}{\|u_{\mathrm{opt}}\|} + \frac{\|{u}_{\mathrm{reg}} - \hat{u}_{\mathrm{reg}}\|}{\|u_{\mathrm{opt}}\|}.
        \end{equation*}
        Using the estimate from Theorem \ref{thm:projerror} we get
        \begin{equation*}
            \frac{\|u_{\mathrm{opt}} - u_{\mathrm{reg}}\|}{\|u_{\mathrm{opt}}\|} \leq \left(1+\sqrt{\kappa(A)}\right) \frac{\|u_{\mathrm{opt}} - \Pi u_{\mathrm{opt}}\|}{\|u_{\mathrm{opt}}\|} \leq \left(1+\sqrt{\kappa(A)}\right) \varepsilon_{\mathrm{P}}.
        \end{equation*}
        It remains to bound the other term. Since $\Psi$ has orthogonal columns we obtain from Theorem \ref{thm:randerror} 
        \begin{equation*}
            \frac{\|{u}_{\mathrm{reg}} - \hat{u}_{\mathrm{reg}}\|}{\|u_{\mathrm{reg}}\|} \leq  \sqrt{\kappa(G)}\frac{\varepsilon_{\mathrm{R}}}{1-\varepsilon_{\mathrm{R}}} \implies \frac{\|{u}_{\mathrm{reg}} - \hat{u}_{\mathrm{reg}}\|}{\|u_{\mathrm{opt}}\|} \leq \frac{\|u_{\mathrm{reg}}\|}{\|u_{\mathrm{opt}}\|} \sqrt{\kappa(G)}\frac{\varepsilon_{\mathrm{R}}}{1-\varepsilon_{\mathrm{R}}}.
        \end{equation*}
        Since we have shown in the proof of Theorem \ref{thm:projerror} that
        \begin{equation*}
            u_{\mathrm{reg}} = \Pi u_{\mathrm{opt}} + \Psi G^{-1} \Psi^T A(I - \Pi)u_{\mathrm{opt}}
        \end{equation*}
        we can estimate
        \begin{equation*}
            \|u_{\mathrm{reg}}\| \leq \|\Pi u_{\mathrm{opt}}\| + \|\Psi G^{-1} \Psi^T A(I - \Pi)u_{\mathrm{opt}}\| \leq \| u_{\mathrm{opt}}\| + \sqrt{\kappa(A)} \|(I - \Pi)u_{\mathrm{opt}}\|.
        \end{equation*}
        As before, we have used the fact that
        \begin{equation*}
            \Psi G^{-1} \Psi^T A = \Psi V_X \Sigma_X^{-1} U_X^T Y \implies \|\Psi G^{-1} \Psi^T A \| \leq \sqrt{\kappa(A)}.
        \end{equation*}
        From $\|u_{\mathrm{opt}} - \Pi u_{\mathrm{opt}}\| \leq \varepsilon_{\mathrm{P}}\|u_{\mathrm{opt}}\|$ it follows that
        \begin{equation*}
            \frac{\|u_{\mathrm{reg}}\|}{\|u_{\mathrm{opt}}\|} \leq 1+\varepsilon_{\mathrm{P}}\sqrt{\kappa(A)},
        \end{equation*}
        which completes the proof.
    \end{proof}
\end{corollary}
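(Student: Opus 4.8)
The plan is to split the total error by the triangle inequality into the two contributions already identified in \eqref{eq:projstep} and \eqref{eq:randstep}, namely the projection error and the randomisation (sketching) error:
\[
\frac{\|u_{\mathrm{opt}} - \hat{u}_{\mathrm{reg}}\|}{\|u_{\mathrm{opt}}\|} \leq \frac{\|u_{\mathrm{opt}} - u_{\mathrm{reg}}\|}{\|u_{\mathrm{opt}}\|} + \frac{\|u_{\mathrm{reg}} - \hat{u}_{\mathrm{reg}}\|}{\|u_{\mathrm{opt}}\|},
\]
and then bound the first summand by Theorem \ref{thm:projerror} and the second by Theorem \ref{thm:randerror}, converting the relative error in the reduced coordinates into one measured against $\|u_{\mathrm{opt}}\|$.

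The first term is immediate: Theorem \ref{thm:projerror} gives $\|u_{\mathrm{opt}} - u_{\mathrm{reg}}\| \leq (1 + \sqrt{\kappa(A)})\,\|u_{\mathrm{opt}} - \Pi u_{\mathrm{opt}}\|$, and plugging in the subspace hypothesis $\|u_{\mathrm{opt}} - \Pi u_{\mathrm{opt}}\| \leq \varepsilon_{\mathrm{P}}\|u_{\mathrm{opt}}\|$ yields the summand $(1 + \sqrt{\kappa(A)})\varepsilon_{\mathrm{P}}$ of the claimed bound. Nothing further is needed here.

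For the second term the main point of care is that Theorem \ref{thm:randerror} controls $\|\hat{r}^{(t)} - G^{-1}\Psi^T b\|$ relative to $\|G^{-1}\Psi^T b\|$, i.e. in the reduced $\rho$-dimensional coordinates, whereas we need a bound relative to $\|u_{\mathrm{opt}}\|$. Because $\Psi$ has orthonormal columns, $\hat{u}_{\mathrm{reg}} = \Psi\hat{r}^{(t)}$ and $u_{\mathrm{reg}} = \Psi G^{-1}\Psi^T b$ (Lemma \ref{lem:uReg}) satisfy $\|\hat{u}_{\mathrm{reg}} - u_{\mathrm{reg}}\| = \|\hat{r}^{(t)} - G^{-1}\Psi^T b\|$ and $\|u_{\mathrm{reg}}\| = \|G^{-1}\Psi^T b\|$, so Theorem \ref{thm:randerror} delivers $\|u_{\mathrm{reg}} - \hat{u}_{\mathrm{reg}}\| \leq \sqrt{\kappa(G)}\,\tfrac{\varepsilon_{\mathrm{R}}}{1-\varepsilon_{\mathrm{R}}}\,\|u_{\mathrm{reg}}\|$. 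It then remains to compare $\|u_{\mathrm{reg}}\|$ with $\|u_{\mathrm{opt}}\|$; for this I would reuse the identity $u_{\mathrm{reg}} = \Pi u_{\mathrm{opt}} + \Psi G^{-1}\Psi^T A (I - \Pi)u_{\mathrm{opt}}$ established inside the proof of Theorem \ref{thm:projerror}, together with the estimate $\|\Psi G^{-1}\Psi^T A\| = \|\Psi V_X \Sigma_X^{-1} U_X^T Y\| \leq \sqrt{\kappa(A)}$ proved there, to get $\|u_{\mathrm{reg}}\| \leq \|u_{\mathrm{opt}}\| + \sqrt{\kappa(A)}\|(I-\Pi)u_{\mathrm{opt}}\| \leq (1 + \varepsilon_{\mathrm{P}}\sqrt{\kappa(A)})\|u_{\mathrm{opt}}\|$. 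Multiplying the two factors gives the first summand $(1 + \varepsilon_{\mathrm{P}}\sqrt{\kappa(A)})\sqrt{\kappa(G)}\tfrac{\varepsilon_{\mathrm{R}}}{1-\varepsilon_{\mathrm{R}}}$ and, added to the projection term, completes the proof. I expect no genuinely new estimate is required: the only mildly delicate step is the bookkeeping of which norm each relative bound is taken against, i.e. passing between $\hat{r}$-space and $\mathcal{S}_\rho$ via orthonormality of $\Psi$, and turning the $\|u_{\mathrm{reg}}\|$-relative randomisation bound into a $\|u_{\mathrm{opt}}\|$-relative one.
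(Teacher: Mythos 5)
Your proposal is correct and follows essentially the same route as the paper's proof: triangle inequality split into projection and sketching errors, Theorem \ref{thm:projerror} for the first term, Theorem \ref{thm:randerror} (transferred via orthonormality of $\Psi$) for the second, and the identity $u_{\mathrm{reg}} = \Pi u_{\mathrm{opt}} + \Psi G^{-1}\Psi^T A(I-\Pi)u_{\mathrm{opt}}$ with $\|\Psi G^{-1}\Psi^T A\|\leq\sqrt{\kappa(A)}$ to bound $\|u_{\mathrm{reg}}\|/\|u_{\mathrm{opt}}\|$ by $1+\varepsilon_{\mathrm{P}}\sqrt{\kappa(A)}$. No gaps; this matches the paper's argument step for step.
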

If we assume that $\varepsilon_{\mathrm{P}} \sqrt{\kappa(G)} \approx 1$, then the error estimate from Corollary \ref{cor:totalerror} states, with small leading constants, that
\begin{equation*}
    \frac{\|u_{\mathrm{opt}} - \hat{u}_{\mathrm{reg}}\|}{\|u_{\mathrm{opt}}\|} \leq \mathcal{O}\left((\varepsilon_{\mathrm{R}} +
         \varepsilon_{\mathrm{P}}) \sqrt{\kappa(A)}\right).
\end{equation*}
It therefore makes sense to have a sketching error $\varepsilon_{\mathrm{R}}$ that is of the same order as the projection error $\varepsilon_{\mathrm{P}}$. In practice we found that projection errors of roughly 1\% to 10\% can be expected so that the sketching induced error isn't very harmful if we choose the sample size as in Theorem \ref{thm:randerror} with $\varepsilon_{\mathrm{R}} = 0.1$.

\section{Numerical results}\label{sec:ne}

To test the performance of Algorithm \ref{alg:sketch1} we consider the finite element formulation of the elliptic equation \eqref{pde} with homogeneous Dirichlet boundary conditions $u=0$ on $\partial \Omega$ and a forcing term derived from a piecewise constant approximation of the function
$$
f(x) = \begin{cases} 
5 & \text{if} \; \sqrt{(x_1+\frac 1 2)^2 + x_2^2 + x_3^2} \leq 0.3,\\
0 & \text{otherwise},
\end{cases}.
$$
We discretise the model on a spherical domain  $\Omega$ $(d=3)$ of unit radius comprising $k=684560$ unstructured linear tetrahedral elements. This leads to a total $116805$ nodes of which $n=101509$ are situated in the interior of the domain. In these circumstances $X$ is a tall matrix with $2053680$ rows, the stiffness matrix $A$ has dimensions $101509 \times 101509$ and the sample space is $[2053680]$.  

We seek to assess the practical performance of our algorithm in terms of its speed and accuracy in computing the sketched solution under various choices sampling budgets and low-dimensional subspaces, for the proposed sampling distribution. To achieve this we perform three benchmark tests involving realisations of (i) a uniformly distributed random parameter field, (ii) a smoothly varying lognormal random field, and (iii) a random field with jump discontinuities. For each of these we run a sequence of $N=100$ simulations, i.e. $p$ queries, and record timings and error measures on average. For each realisation we compute also the conventional FEM solution to provide a reference for comparison. The high-dimensional $u_{\mathrm{opt}}$ is computed using Matlab's built-in \verb"A\b" command \cite{MATLAB:18}, and the times provided include the efficient assembly of the full stiffness matrix as a triple product of sparse matrices $A=D^TZ^2D$. Our code was implemented in Matlab R2018b and executed on a workstation equipped with two 14-core Intel Xeon dual processors, running Linux NixOS with 384GB RAM.

In the offline phase of Algorithm \ref{alg:sketch1} we form a low-dimensional ONB for the projection by computing the last eigenfunctions of the sparse Laplacian matrix discretised on $\Omega$. For this time consuming and memory demanding operation we have resorted to the \verb"svds" and \verb"qr" commands which avoid computing the complete spectrum or they produce a sparse ONB respectively. The computation of the sampling distribution based on the leverage scores of $X_{\Delta} = Z_{\Delta} D \Psi$ was also performed once during the offline phase and took about 4 hours, using the \verb"svd(,'econ')" command. The  distribution $q$ was sampled with replacement during the online phase of the algorithm using the efficient  command \verb"datasample", which indicatively, for the chosen $q$, outputs a million samples in less than 0.3 s. Notice that although this sampling implementation is not independent of the dimension $kd$, there exist alternative schemes that can handle arbitrarily large distributions with constant complexity \cite{Bringmann2017}.

In the implementation of the algorithm we record the following quantities--diagnostics that provide evidence on the performance in the conditions of each benchmark: the ratio $c'/3k$ indicating how many of the rows of $X$ are used in the sketch, the relative subspace projection error $\|\Pi u_{\mathrm{opt}} - u_{\mathrm{opt}}\|/\|u_{\mathrm{opt}}\|$, the upper bound of the randomisation error $\|\hat G^{-1}G - I\|$, the relative regression error $\|\hat u_{\mathrm{reg}} - u_{\mathrm{reg}}\|/\|u_{\mathrm{reg}}\|$, and the relative total error $\|\hat u_{\mathrm{reg}} - u_{\mathrm{opt}}\|/\|u_{\mathrm{opt}}\|$. In the context of real-time model prediction in manufacturing processes an upper limit of 10\% for the total error is deemed reasonable. 

\subsection{Uniformly random parameter field}\label{urpf}

In this first instance we simulate sketched solutions for 100 parameter vectors $p \in \mathbb{R}^k$ drawn at random from $\mathcal{U}\bigl ([10^{-1},10^2]\bigr)$. Five sets of simulations were performed using ONBs incorporating the last $\rho=\{50, 100\}$ singular functions of the Laplacian. Our focus was on monitoring the trade-off between accuracy and time consumption when  $c=\{5 \times 10^5, 10^6, 5\times 10^6\}$ iid samples are drawn from $p$. The results are tabulated in table \ref{tableUN}.  

Although the values in $p$ vary over four orders of magnitude, the parameter has a homogeneous expectation within the domain and thus overall the algorithm yields sketched solutions at 10\% or less total error, with only 100 basis functions. The results show that the sampling is highly non-uniform since even in the case where a million idd samples were taken these involved only 41074, a mere 6\%, of the rows of $X$. The sketching-induced error factor $\|\hat G^{-1}G - I\|$ appears to reduce almost linearly with the number of samples $c$. Comparing the  relative subspace projection $\|\Pi u_{\mathrm{opt}} - u_{\mathrm{opt}}\|$ and total $\|\hat u_{\mathrm{reg}} - u_{\mathrm{opt}}\|$ errors note that for $\|\hat G^{-1}G - I\| \approx 1$ the later is kept marginally larger than the former, which verifies the regularising effect of the projection on the sketching-induced noise. It is also important to see that in switching from $\rho=50$ to $\rho=100$ the projection error is halved to 0.03, however the number of samples necessary to yield the same levels of the error increases by about 5 times. For relative error tolerances around the 10\% mark, the times recorded are below 1 s, while by comparison the time for computing $u_{\mathrm{opt}}$ was on average found to be 23.75 s. 

The trade-off between speed and accuracy can be seen by comparing the results in the first and last rows of the table \ref{tableUN} where the algorithm achieves a 4\% total error, when the projection error is at 3\%, after five million samples. On the other hand, solutions within a 10\% error margin, when the projection error is at 7\%, are obtained in less than 0.5 s, which is 55 times faster than computing the standard $u_{\mathrm{opt}}$. The speedup in sketching the more accurate solution with $\rho=100$ and $c=5$ million is still 7 times faster, compared to the FEM solver. The histograms in figure \ref{fig:histograms} provide a further insight on how the various error components vary within the ensemble of the 100 problems. We point out that the numerical results are in good agreement with the assertion of Theorem \ref{thm:randerror}. For the example shown in figure \ref{fig:histograms}, i.e. when $\rho=50$ and the error tolerance is $\varepsilon = 10\%$, our theorem predicts $c=15 \rho \log(15 \rho)\beta^{-1} \varepsilon^{-2} \approx 5.0\cdot 10^5\beta^{-1} $ samples which is consistent to the observed $c=1$ when $\beta^{-1} \approx 2$. In the histograms we see that the sketching error virtually never exceeds $10\%$ and that $\|\hat{G}^{-1}G - I\|$ exhibits the same pattern as $\|u_{\mathrm{opt}}- u_{\mathrm{reg}}\| / \|u_{\mathrm{opt}}\|$ which supports the claim that this quantity is driving the sketching error. Similar observations can be made for the other cases of table \ref{tableUN}. Figure \ref{fig:histograms} also shows that, although their magnitude is comparable, the variability in the projection error is much smaller than that of the sketching error. This is not surprising as the sketching is an intrinsically random method while the differences in the projection are only due to perturbations in the parameter.

\begin{table}
\centering
\begin{tabular}{c|c|c|c|c|c|c|c}
  \hline
$\rho$ & $c$ [$10^6$] & time [s] & $c'/3k$ & $\frac{\|\Pi u_{\mathrm{opt}} - u_{\mathrm{opt}}\|}{\|u_{\mathrm{opt}}\|}$  & $\|\hat G^{-1} G - I\|$ & $\frac{\|\hat u_{\mathrm{reg}} - u_{\mathrm{reg}}\|}{\|u_{\mathrm{reg}}\|}$ & $\frac{\|\hat u_{\mathrm{reg}} - u_{\mathrm{opt}}\|}{\|u_{\mathrm{opt}}\|}$ \\ \hline 
50 & 0.5 & 0.43 & 0.04 & 0.07 & 1.60 & 0.07 & 0.09\\
50  & 1 & 0.78 &  0.06 & 0.07 & 1.07 & 0.05 & 0.08\\
100 & 0.5 & 0.49 & 0.04 & 0.03 & 3.99 & 0.11 & 0.11\\
100 & 1 & 0.80 & 0.06 & 0.03 & 2.30 & 0.06 & 0.07\\
100 & 5 & 3.22 & 0.11 & 0.03 & 0.77 & 0.02 & 0.04\\
\hline
\end{tabular}
\caption{Numerical results for the tests performed with $p \sim \mathcal{U}([10^{-1},10^2])$. The quantities above are averages over 100 runs with different $p$ realisations. The results show the impact of $c$ and $\rho$ on the various error components and the computing times. Note that for a sufficiently large $c$ the total error is only marginally larger than the projection error, which manifest the regularising effect of the projection on the sketching induced error.}
\label{tableUN}
\end{table}

\begin{figure}
    \centering
    \includegraphics[width=0.5\textwidth]{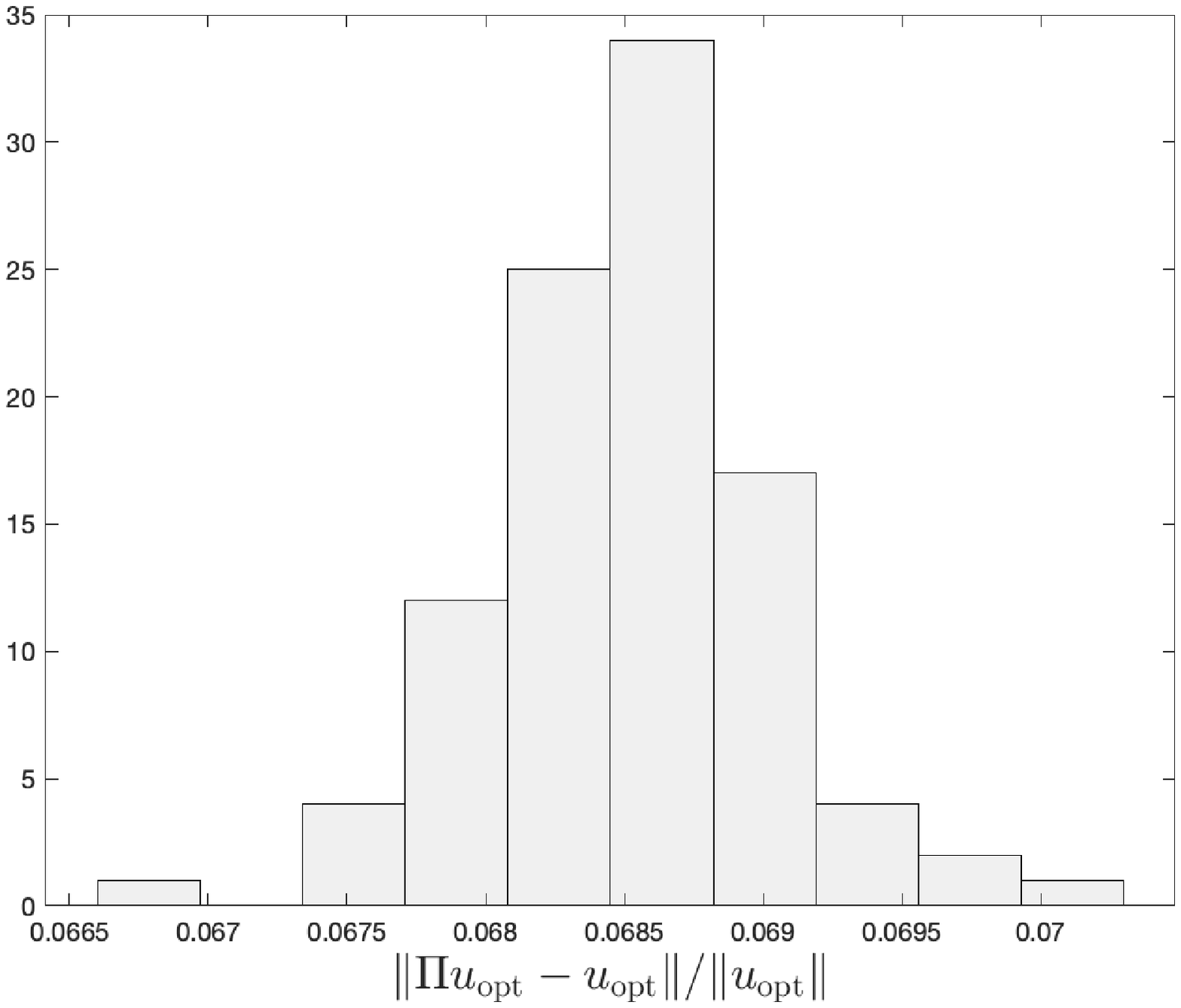}\includegraphics[width=0.5\textwidth]{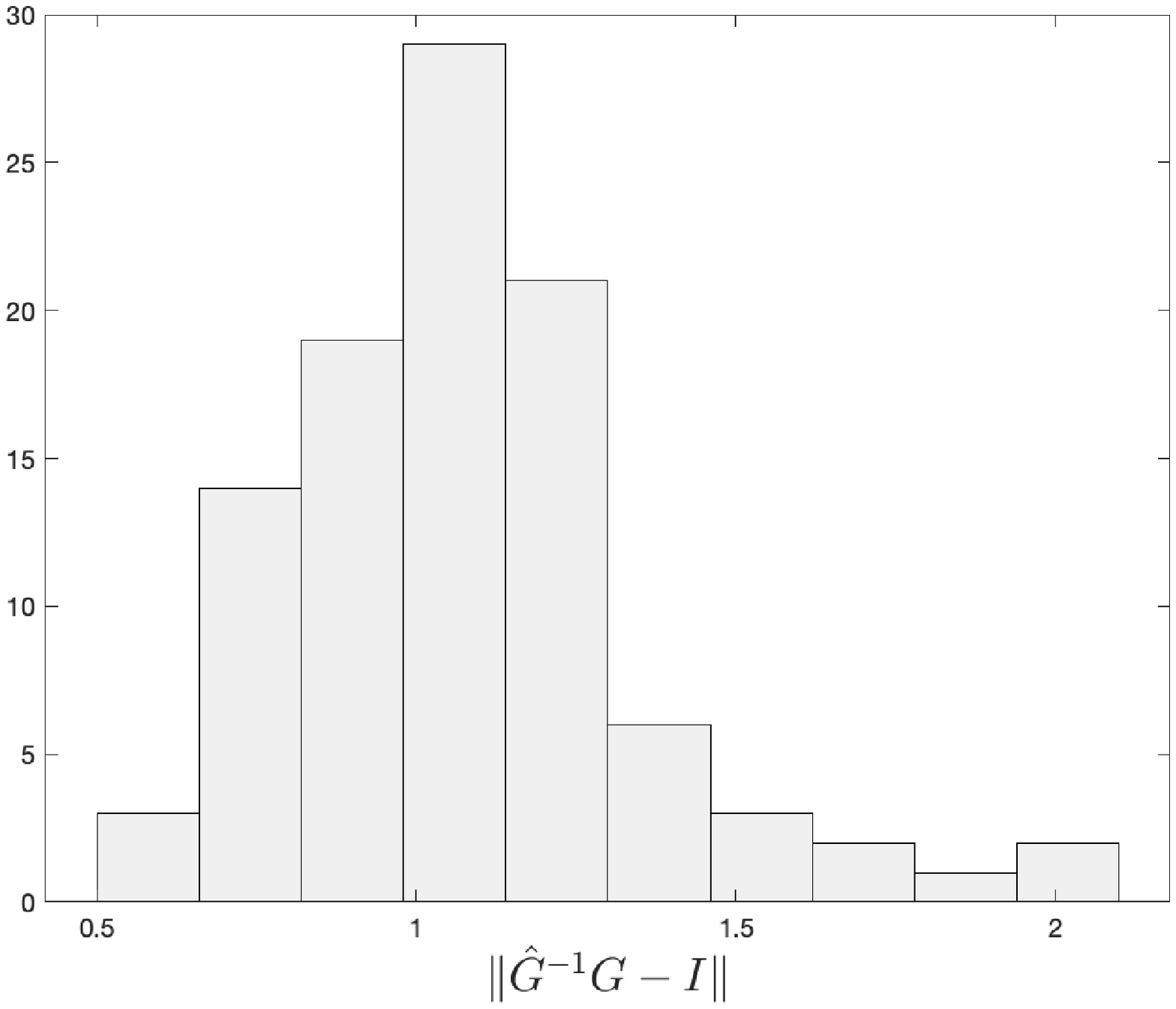}\\
    \includegraphics[width=0.5\textwidth]{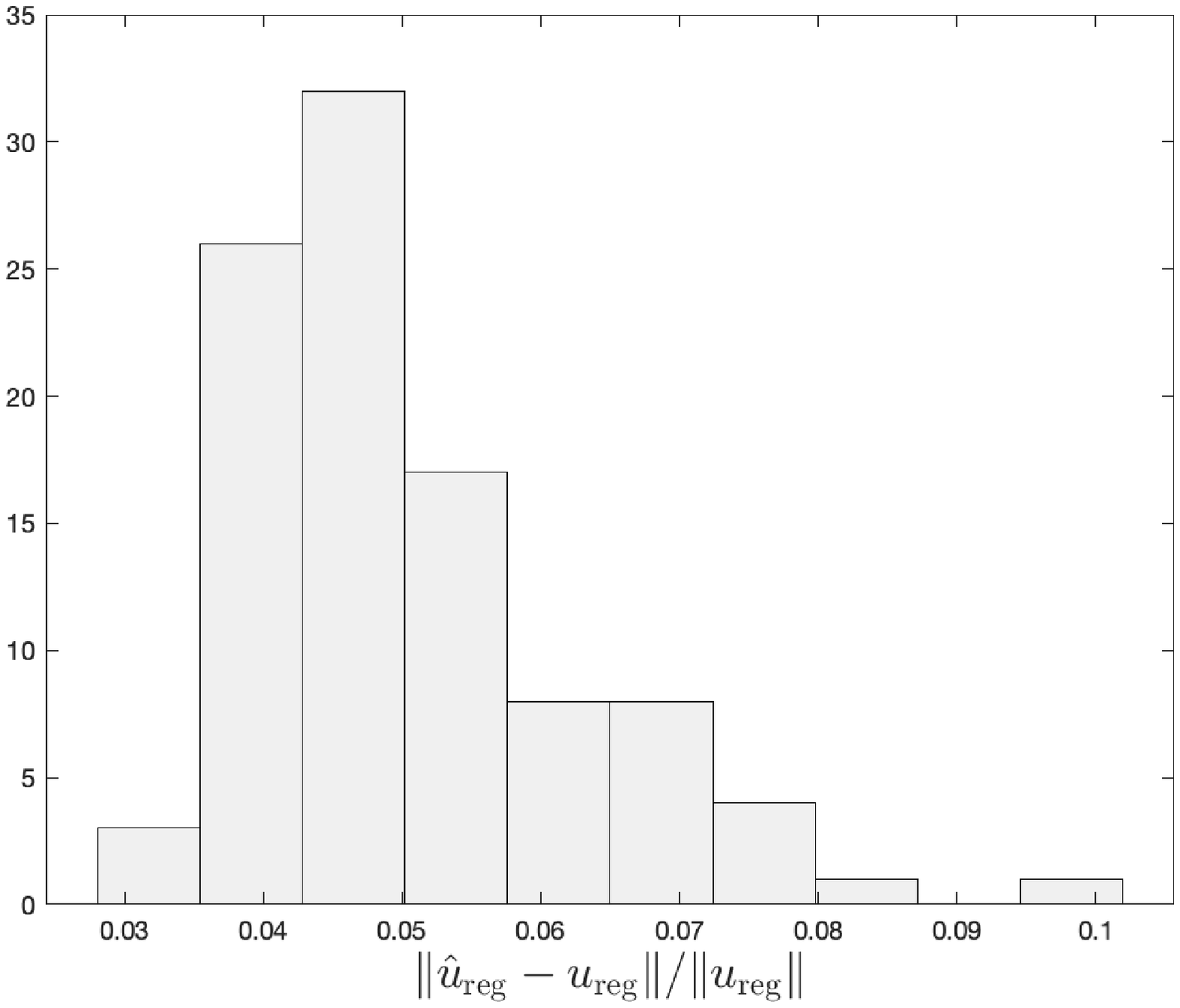}\includegraphics[width=0.5\textwidth]{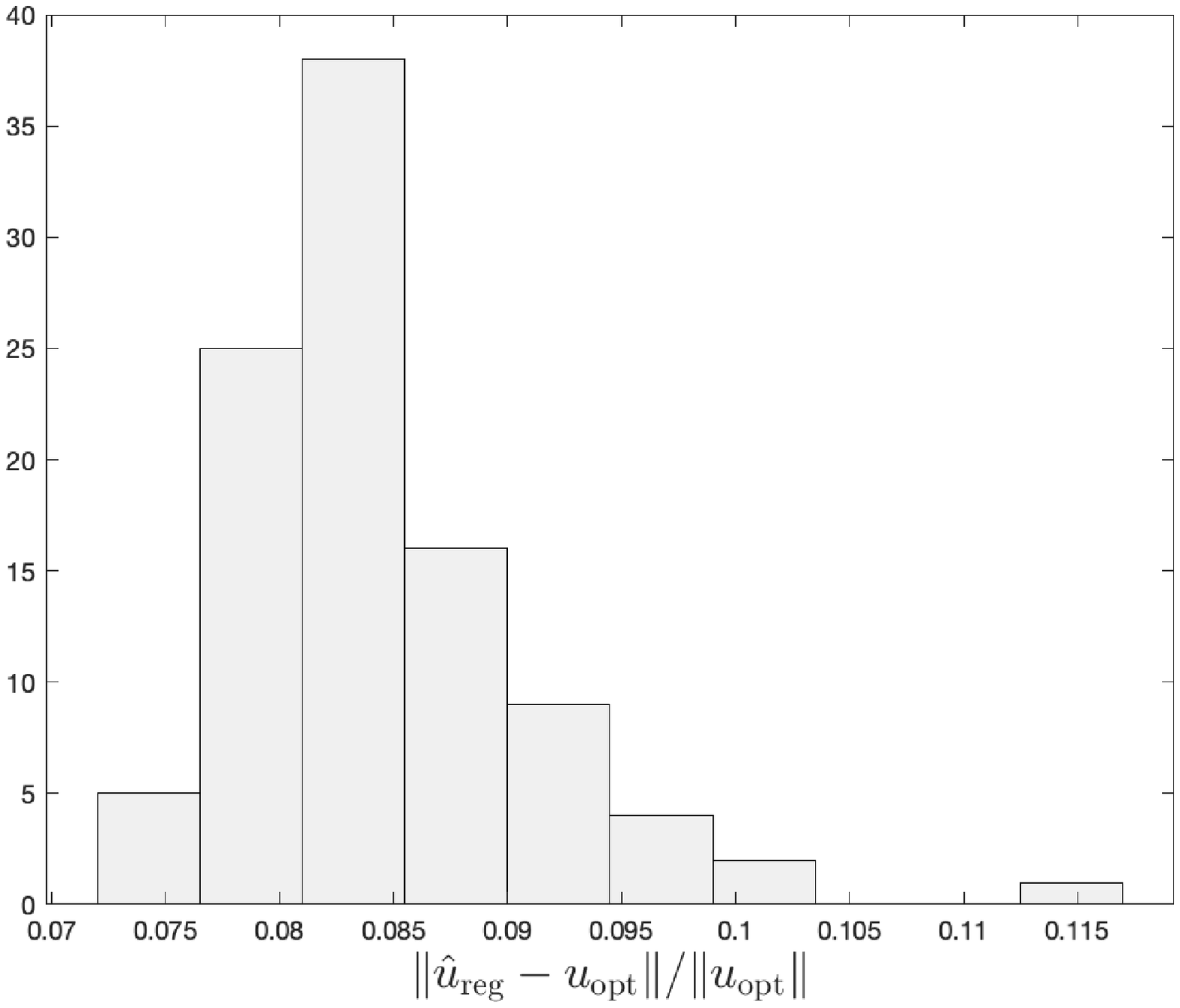}
    \caption{Histograms showing the variation in the various error quantities relating to the performance of our algorithm, as recorded in the table \ref{tableUN} for 100 different realisations of the $p$ vector from $\mathcal{U}([10^{-1},10^2])$ of the code with $\rho=50$ and $c=1$ million.}
    \label{fig:histograms}
\end{figure}

\subsection{Smooth parameter field}\label{spf}

In the second benchmark we turn our attention to parameter functions with smooth spatial variation like those encountered in the context of uncertainty quantification for PDEs \cite{LordPowellShardlow}. As the anticipated FEM solution is smooth we maintain the bases used in \ref{urpf}. In this case, the parameter $p$ is a lognormal random field given by $p \doteq \exp(b)$, where $b$ is a zero-mean Gaussian random field with Whittle-Mat\'{e}rn covariance function with smoothness parameter $\nu>0$ given by
\be
C_{b}(x,y)=\frac{\operatorname{Var}[b]}{2^{\nu-1} \Gamma(\nu)}\left(\|x-y\|_M\right)^{\nu} K_{\nu}\left(\|x-y\|_M\right), \quad x, y \in \Omega,
\ee
where $\Gamma(\nu)$ is the Gamma function, $\|x\|_{M}^{2}=x^{T} M^{-1} x$ is the weighted Euclidean norm with positive definite matrix $M$ and $K_{\nu}$ is the order $\nu>0$ modified Bessel function of the second kind. Here we use $\nu=15/2$, $M^{1/2}=\mathrm{diag}(1/5,1/5,1/5)$ and $\operatorname{Var}[b]=1$. We draw realisations of $p$ by calculating once the Karhunen-Lo\`{e}ve expansion of $b$ and then drawing iid from $\mathcal{N}(0,1)$.

The results presented in table \ref{tableWM} show a similar performance to the uniformly random case in subsection \ref{urpf}. The suitability of the low-dimensional subspace is evidenced by the 7\% relative projection error attained at $\rho=50$. Sketched solutions within an error tolerance of 10\% were computed in less than 1 s. Further, note that the total error is within a 2\% margin from the projection error, which demonstrates the effectiveness of our sketching regularisation approach, apart from the test with $\rho=100$ and $c=1$ where $\|\hat G^{-1}G - I \|$ is considerably higher, implying that $c$ was insufficiently small for that test. This observation is consistent with our error bound in  \eqref{thm:randerror}.
Comparing the results for $(\rho=50,c=5)$ and $(\rho=100,c=1)$ shows that in the former case, although using half the number of basis functions and five times more samples, due to the larger projection error, the total error is still 1\% larger than that of the later. 
The images presented in figure \ref{fig:images} correspond to one of the simulations in this benchmark with $\rho=100$ and $c=1$ million, illustrating a cross section of the profile of $p$, the exact FEM solution, the sketched solution and the relative error between the two.

\begin{table}
\centering
\begin{tabular}{c|c|c|c|c|c|c|c}
  \hline
$\rho$ & $c$ [$10^6]$ & time [s] & $c'/3k$ & $\frac{\|\Pi u_{\mathrm{opt}} - u_{\mathrm{opt}}\|}{\|u_{\mathrm{opt}}\|}$  & $\|\hat G^{-1} G - I\|$ & $\frac{\|\hat u_{\mathrm{reg}} - u_{\mathrm{reg}}\|}{\|u_{\mathrm{reg}}\|}$ & $\frac{\|\hat u_{\mathrm{reg}} - u_{\mathrm{opt}}\|}{\|u_{\mathrm{opt}}\|}$\\ 
\hline 
25  & 0.5 & 0.52 & 0.04 & 0.15 & 0.73 & 0.05 & 0.17\\
50  & 1 & 0.52 &  0.06 & 0.07 & 0.95 & 0.04 & 0.08\\
50  & 5 & 3.51 & 0.12 & 0.07 & 0.35 & 0.02 & 0.07\\
100 & 1 & 0.85 & 0.06 & 0.03 & 1.97 & 0.05 & 0.06\\
100 & 5 & 3.51 & 0.12 & 0.03 & 0.65 & 0.04 & 0.04\\
\hline
\end{tabular}
\caption{Numerical results for the tests with lognormal random field drawn from a Whittle-Mat\'{e}rn model with a smooth covariance. The algorithm yields solutions with less than 10\% error with as few as 50 basis functions. Similar to the uniformly random case in table \ref{tableUN}, the total errors are sustained close to the projection errors when $\|\hat G^{-1}G - I\| < 1$.}
\label{tableWM}
\end{table}

\begin{figure}
\centering 
\includegraphics[width=0.45\linewidth]{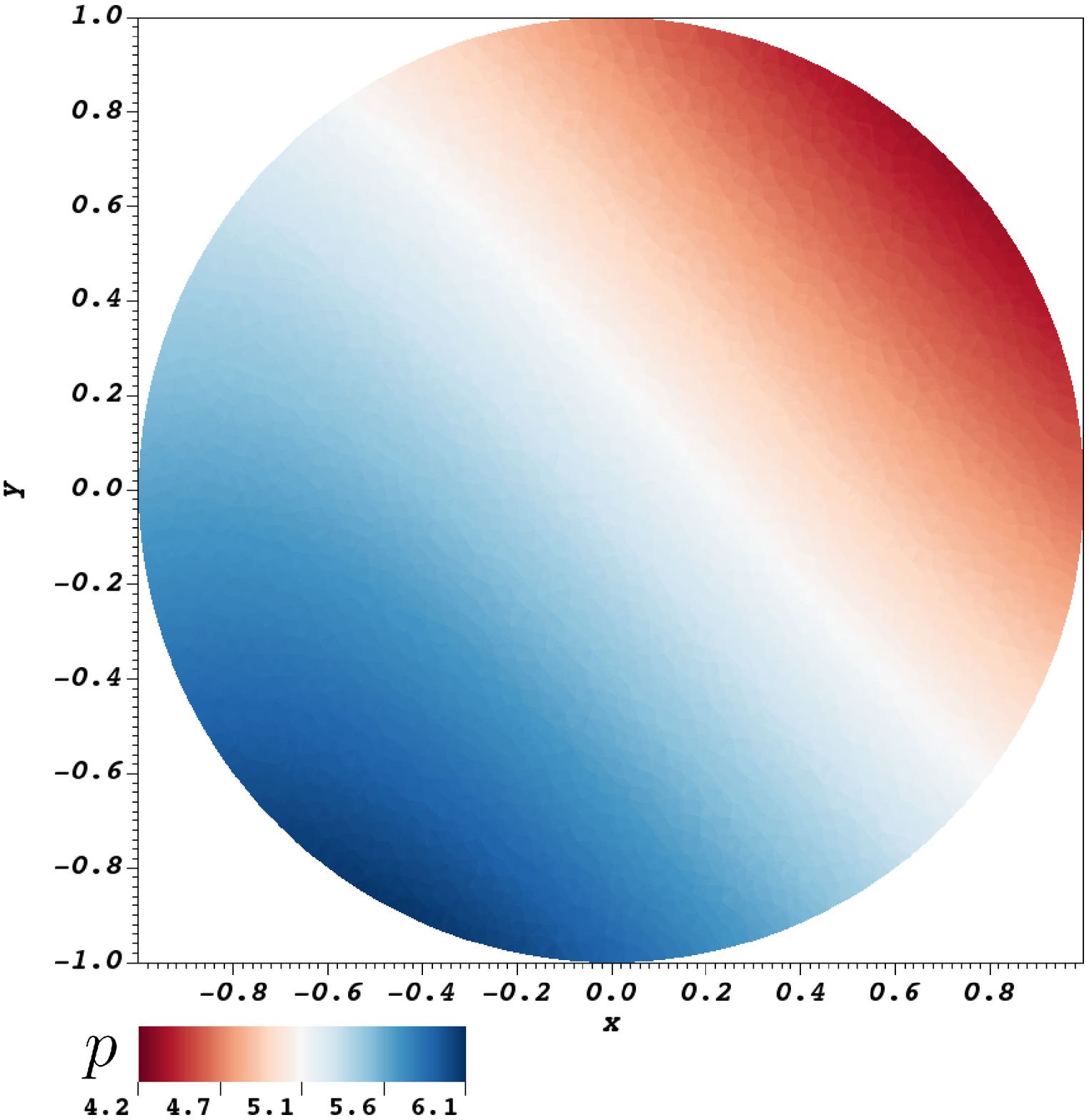}
\includegraphics[width=0.45\linewidth]{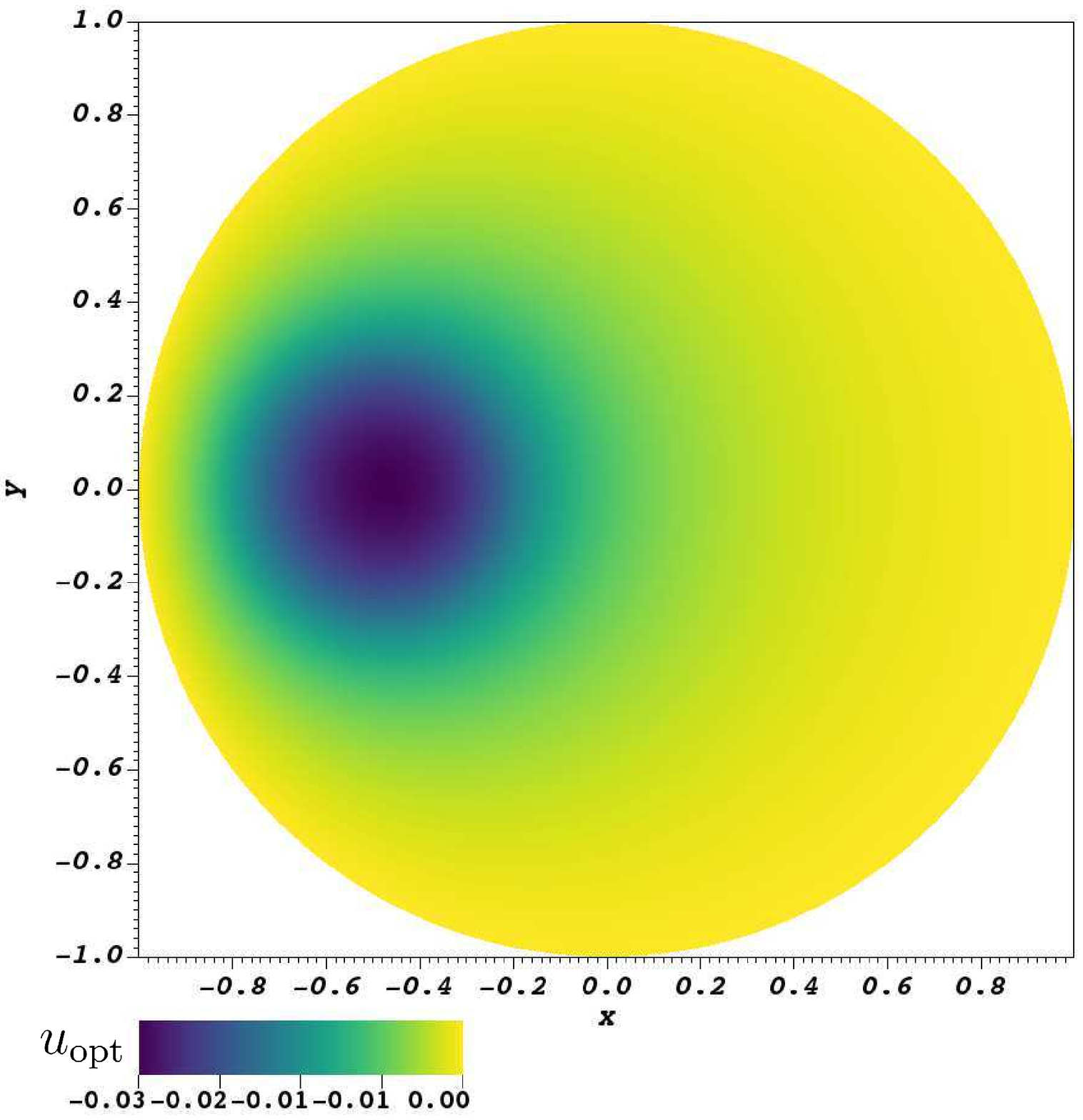}\\
\includegraphics[width=0.45\linewidth]{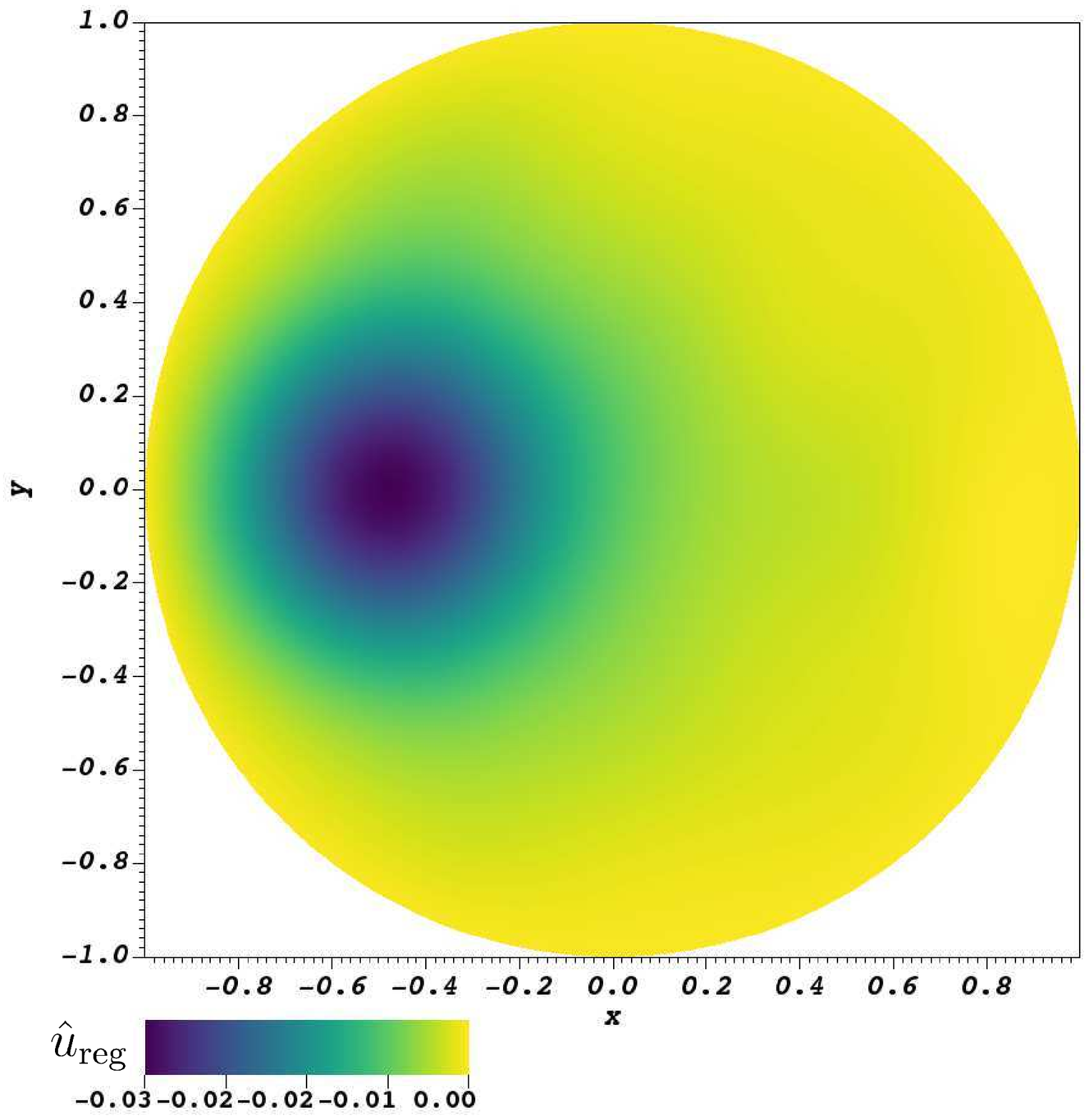}
\includegraphics[width=0.45\linewidth]{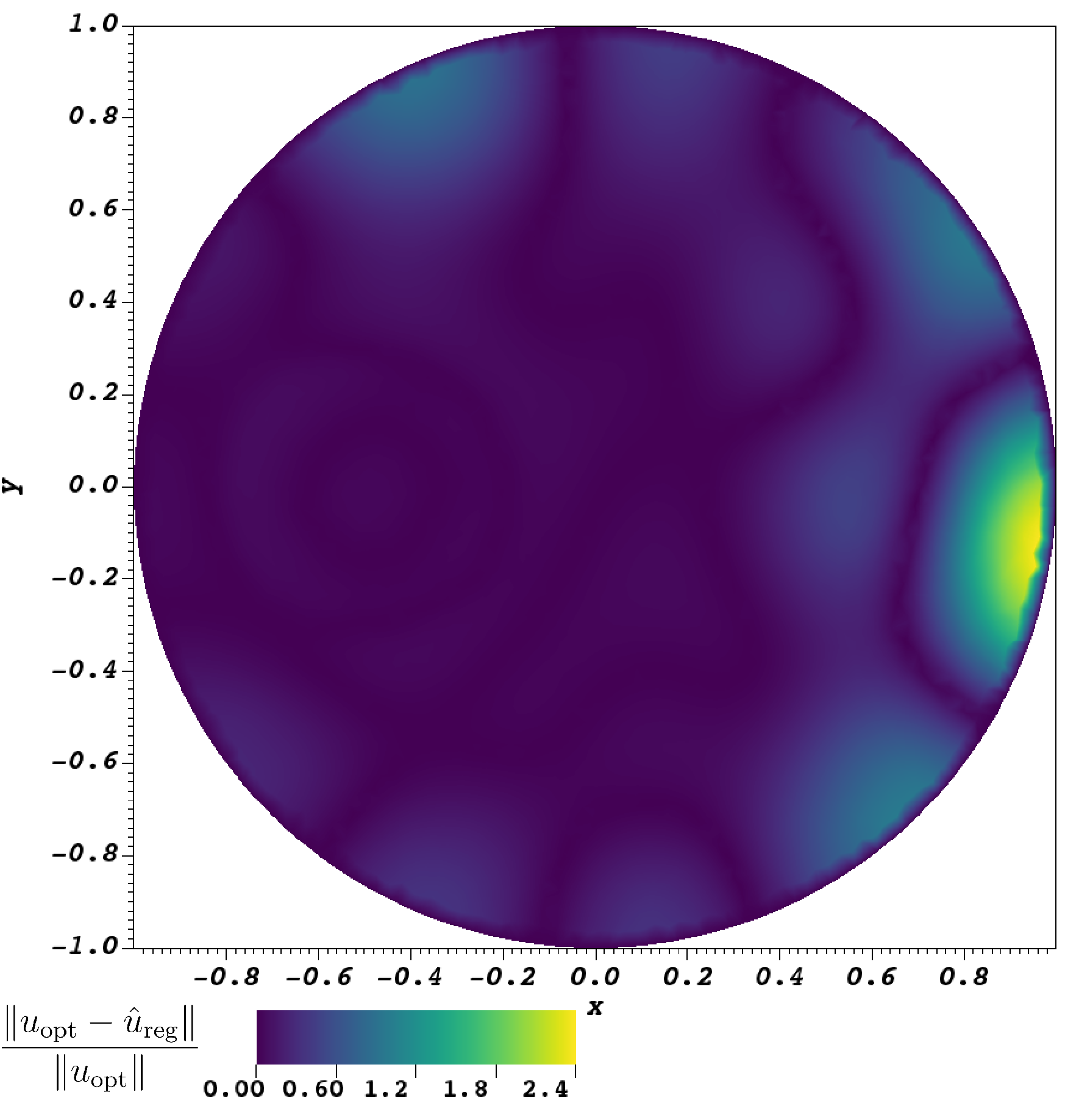}
\caption{At the top left, a view of a lognormal field $p$ sampled from the Whittle-Matt\'{e}rn class, and to its right the corresponding view of $u_{\mathrm{opt}}$ that took 23.75 s to compute. Below to the left, the sketched projected solution $\hat u_{\mathrm{reg}}$ computed after 0.83 s and to its right the profile of the relative error between $u_{\mathrm{opt}}$ and $\hat u_{\mathrm{reg}}$.}
\label{fig:images}
\end{figure}

\begin{figure}
\centering 
\includegraphics[width=0.65\linewidth]{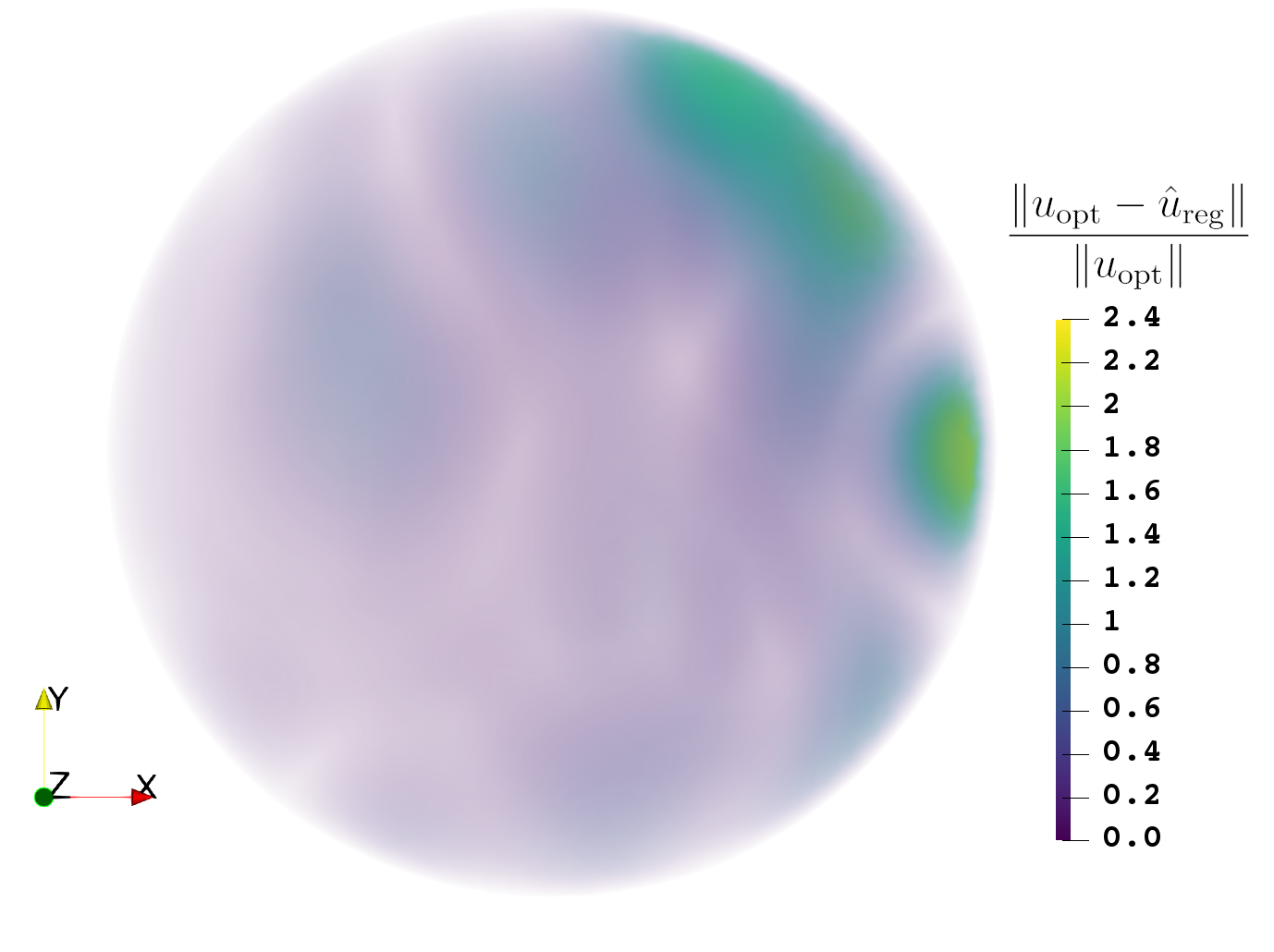}
\caption{A 3D view of the relative error profile between $u_{\mathrm{opt}}$ and $\hat u_{\mathrm{reg}}$.}
\label{fig:image3d}
\end{figure}

\subsection{Non-smooth parameter field}

A more challenging benchmark test is to consider the FEM solution for a parameter field with non-smooth variation. In this case it is natural to anticipate that any significant jump discontinuities in the profile of $p$ will have an adverse effect on the condition number of the stiffness matrix  \cite{KamenskiHuangXu}. For our simulations we choose a piecewise constant approximation of the positive function 
$$
p(x) \doteq 9.1+\mathrm{sgn}(x_1) + 3\mathrm{sgn}(x_2) + 5\mathrm{sgn}(x_3) + 0.1\mathcal{U}\bigl ( [0,1]\bigr )
$$ 
which is discontinuous along the three axes. The sign function $\mathrm{sgn}:\mathbb{R} \to \mathbb{R}$ is given by $\mathrm{sgn}(x)=x/\lvert x \rvert$ when $x \neq 0$ and $\mathrm{sgn}(0)=0$. In constructing the projection subspace we found that the smooth basis utilised in the previous cases was not appropriate to this case and we thus resorted in a sparse ONB taking a subset of the columns of the sparse unitary matrix computed from the QR decomposition of the Laplacian. 

The results in table \ref{tableDC} suggest that the chosen basis is not very appropriate since not only the number of basis functions is substantially larger, but also the reduction in the projection error for a 100\% increase in $\rho$ is quiet marginal. In turn, this increase in the dimension of $\hat G$ affects the level of sketching error, as even with $c=5$ million samples $\|\hat G^{-1} G - I\| > 1$. Consequently, this has a profound effect on timings, although the sketched approach maintains a five fold advantage to the standard FEM solver. For the tests for $(\rho=2 \times 10^3, c=10^6)$ and $(\rho=2 \times 10^3, c=5 \times 10^6)$ notice that increasing the samples by five times does not yield a significant improvement in the results, which is likely triggered by the large $\kappa(A) \approx 10^5$ in the error term of Theorem \ref{thm:projerror} which causes the $\|u_{\mathrm{reg}} - u_{\mathrm{opt}}\|$ to grow. 
\begin{table}
\centering
\begin{tabular}{c|c|c|c|c|c|c|c}
\hline
$\rho$ & $c$ $[10^6]$ & time [s] & $c'/3k$ & $\frac{\|\Pi u_{\mathrm{opt}} - u_{\mathrm{opt}}\|}{\|u_{\mathrm{opt}}\|}$  & $\|\hat G^{-1} G - I\|$ & $\frac{\|\hat u_{\mathrm{reg}} - u_{\mathrm{reg}}\|}{\|u_{\mathrm{reg}}\|}$ & $\frac{\|\hat u_{\mathrm{reg}} - u_{\mathrm{opt}}\|}{\|u_{\mathrm{opt}}\|}$ \\\hline 
1000 & 1 & 2.67 & 0.06 & 0.07 & 4.61 & 0.01 & 0.26 \\
1000 & 5 & 5.96 & 0.12 & 0.05 & 1.25 & 0.01 & 0.26\\
2000 & 1 & 4.87 & 0.06 & 0.02 & 77.36 & 0.02 & 0.08\\
2000 & 5 & 9.95 & 0.12 & 0.03 & 9.64 & 0.01 & 0.08\\
\hline
\end{tabular}
\caption{Numerical results for the non-smooth parameter field. In this case the algorithm requires a far more extensive basis, and thus considerably more samples and computing time to yield solutions within the required 10\% error margin.}
\label{tableDC}
\end{table}

\section{Conclusions}
We have considered expediting the solution of the finite element method equations arising from the discretisation of elliptic PDEs on high-dimensional models. Taking into consideration the multi-query context and the smooth profile of the FEM solution, we proposed a practical sketch-based algorithm that involves projection onto lower-dimensional subspace and sketching using a generic,  sampling distribution derived from the leverage scores of a tall matrix associated with the Laplacian operator. We have elaborated on the impact of the projection in reducing the dimensionality as well as mitigating the effects of sketching noise. The performance of our method was evaluated in a series of  benchmark tests of FEM simulations that demonstrated substantial speed improvements at the cost of a small compromise in accuracy when the stiffness matrix is well conditioned.


\begin{thebibliography}{10}

\bibitem{AvronToledo}
{\sc Avron, H., and Toledo, S.}
\newblock {\em {Effective Stiffness: Generalizing Effective Resistance Sampling
  to Finite Element Matrices}}.
\newblock ArXiv, oct 2011.

\bibitem{BertsekasYu09}
{\sc Bertsekas, D.~P., and Yu, H.}
\newblock {Journal of Computational and Applied Projected equation methods for
  approximate solution of large linear systems}.
\newblock {\em Journal of Computational and Applied Mathematics 227}, 1 (2009),
  27--50.

\bibitem{Bringmann2017}
{\sc Bringmann, K., and Panagiotou, K.}
\newblock Efficient sampling methods for discrete distributions.
\newblock {\em Algorithmica 79}, 2 (Oct 2017), 484--508.

\bibitem{Calvetti}
{\sc Calvetti, D., Dunlop, M., Somersalo, E., and Stuart, A.}
\newblock {Iterative updating of model error for Bayesian inversion}.
\newblock {\em Inverse Problems 34}, 2 (feb 2018), 025008.

\bibitem{cohen2015}
{\sc Cohen, M.~B., Lee, Y.~T., Musco, C., Musco, C., Peng, R., and Sidford, A.}
\newblock Uniform sampling for matrix approximation.
\newblock In {\em Proceedings of the 2015 Conference on Innovations in
  Theoretical Computer Science\/} (New York, NY, USA, 2015), ITCS '15, ACM,
  pp.~181--190.

\bibitem{DrineasMahoneyER}
{\sc Drineas, P., and Mahoney, M.~W.}
\newblock {\em {Effective Resistances, Statistical Leverage, and Applications
  to Linear Equation Solving}}.
\newblock ArXiv, may 2010.

\bibitem{DMMW}
{\sc Drineas~P., Magdon-Ismail~M., M.~M., and D., W.}
\newblock Fast approximation of matrix coherence and statistical leverage.
\newblock {\em Journal of Machine Learning Research 13}, 1 (2012), 3441--3472.

\bibitem{ElmanSilvesterWathen}
{\sc Elman, H., Silvester, D., and Wathen, A.}
\newblock {\em {Finite Elements and Fast Iterative Solvers}}, 2nd~ed.
\newblock Oxford University Press, 2014.

\bibitem{GowerRichtarik}
{\sc Gower, R.~M., and Richt{\'{a}}rik, P.}
\newblock {Randomized Iterative Methods for Linear Systems}.
\newblock {\em SIAM Journal on Matrix Analysis and Applications 36}, 4 (jan
  2015), 1660--1690.

\bibitem{GowerRichtarik2}
{\sc Gower, R.~M., and Richt{\'{a}}rik, P.}
\newblock {\em {Linearly Convergent Randomized Iterative Methods for Computing
  the Pseudoinverse}}.
\newblock ArXiv, dec 2016.

\bibitem{Halko}
{\sc Halko, N., Martinsson, P.~G., and Tropp, J.~A.}
\newblock {Finding Structure with Randomness: Probabilistic Algorithms for
  Constructing Approximate Matrix Decompositions}.
\newblock {\em SIAM Review 53}, 2 (jan 2011), 217--288.

\bibitem{Hartmann2018}
{\sc Hartmann, D., Herz, M., and Wever, U.}
\newblock {\em Model Order Reduction a Key Technology for Digital Twins}.
\newblock Springer International Publishing, Cham, 2018, pp.~167--179.

\bibitem{KamenskiHuangXu}
{\sc Kamenski, L., Huang, W., and Xu, H.}
\newblock {Conditioning of finite element equations with arbitrary anisotropic
  meshes}.
\newblock {\em Mathematics of Computation 83}, 289 (mar 2014), 2187--2211.

\bibitem{Higham}
{\sc Kannan, R., Hendry, S., Higham, N.~J., and Tisseur, F.}
\newblock {Detecting the causes of ill-conditioning in structural finite
  element models}.
\newblock {\em Computers \& Structures 133}, 1 (mar 2014), 79--89.

\bibitem{KirbyLogg}
{\sc Kirby, C.~R., and Logg, A.}
\newblock {Efficient Compilation of a Class of Variational Forms}.
\newblock {\em ACM Transactions on Mathematical Software 33}, 3 (aug 2007),
  025008.

\bibitem{LordPowellShardlow}
{\sc Lord, G.~J., Powell, C.~E., and Shardlow, T.}
\newblock {\em {An introduction to computational stochastic PDEs}}.
\newblock Cambridge University Press, 2014.

\bibitem{MATLAB:18}
{\sc MATLAB}.
\newblock {\em version 9.5.0.944444 (R2018b)}.
\newblock The MathWorks Inc., Natick, Massachusetts, 2019.

\bibitem{regularisationLinSystem}
{\sc Neumaier, A.}
\newblock Solving ill-conditioned and singular linear systems: A tutorial on
  regularization.
\newblock {\em SIAM Review 40}, 3 (1998), 636--666.

\bibitem{Pilanci}
{\sc Pilanci, M., and Wainwright, M.~J.}
\newblock {Iterative Hessian sketch: Fast and accurate solution approximation
  for constrained least-squares}.
\newblock {\em Journal of Machine Learning Research 17\/} (2014), 1--38.

\bibitem{PolydoridesWangBertsekas}
{\sc Polydorides, N., Wang, M., and Bertsekas, D.~P.}
\newblock {A Quasi Monte Carlo Method for Large-Scale Inverse Problems}.
\newblock In {\em Springer Proceedings in Mathematics and Statistics},
  H.~Wo{\'{z}}niakowski, Ed., vol.~23. Monte Carlo and Quasi-Monte Carlo
  Methods 2010. Springer Proceedings in Mathematics {\&} Statistics, 23,
  Springer, 2012, pp.~623--637.

\bibitem{tropp2015}
{\sc Tropp, J.~A.}
\newblock An introduction to matrix concentration inequalities.
\newblock {\em Foundations and Trends® in Machine Learning 8}, 1-2 (2015),
  1--230.

\bibitem{Vavasis}
{\sc Vavasis, S.}
\newblock Stable finite elements for problems with wild coefficients.
\newblock {\em SIAM Journal on Numerical Analysis 33}, 3 (1996), 890--916.

\bibitem{Woodruff}
{\sc Woodruff, D.~P.}
\newblock {Computational Advertising: Techniques for Targeting Relevant Ads}.
\newblock {\em Foundations and Trends{\textregistered} in Theoretical Computer
  Science 10}, 1-2 (2014), 1--157.

\end{thebibliography}
\end{document}